\font\teneufm=eufm10 \font\seveneufm=eufm7 \font\fiveeufm=eufm5
\newcommand{\C}{\mathbb{C}}
\newcommand{\Z}{\mathbb{Z}}
\newcommand{\h}{\mathfrak{h}}
\DeclareMathOperator{\Span}{Span}
\DeclareMathOperator{\Id}{Id}
\DeclareMathOperator{\Mat}{Mat}
\DeclareMathOperator{\GL}{GL}
\DeclareMathOperator{\Res}{Res}
\DeclareMathOperator{\I}{I}
\DeclareMathOperator{\End}{End}
\DeclareMathOperator{\Comp}{Comp}
\numberwithin{equation}{section}
\newtheorem{definition}{Definition}[section]
\newtheorem{example}[definition]{Example}
\theoremstyle{remark}
\newtheorem{remark}[definition]{Remark}
\theoremstyle{plain} 
\newtheorem{theorem}[definition]{Theorem}
\newtheorem{lemma}[definition]{Lemma}
\newtheorem{corollary}[definition]{Corollary}
\newtheorem{proposition}[definition]{Proposition}
\def\Z{\mathbb Z}
\def\C{\mathbb C}
\begin{document}

\title{Exponential modules of $\displaystyle \mathfrak{osp}(1|2)$}
\author[D. Grantcharov]{Dimitar Grantcharov}
	\address{D. Grantcharov: Department of Mathematics, University of Texas at Arlington}
	\email{grandim@uta.edu}	
\author[K. Nguyen]{Khoa Nguyen}
	\address{K. Nguyen: Department of Mathematics and Statistics, Queen's University, Kingston, ON K7L 3N6, Canada}
	\email{k.nguyen@queensu.ca}

\begin{abstract}
We study properties of two  families, $E_{+}(g)$ and $E_-(g)$, of non-weight modules over the orthosymplectic Lie superalgebra $\mathfrak{osp}(1|2)$ that are parameterized by a nonconstant polynomial $g(x) \in \mathbb C [x]$. These families appear naturally from the two oscillator homomorphisms and the exponential modules over the first Weyl algebra $\mathcal D(1)$.  
 We prove simplicity and isomorphism theorems for $E_{+}(g)$ and $E_-(g)$.
\end{abstract}
\subjclass[2020]{17A70, 17B10}
	
	\keywords{orthosymplectic Lie superalgebra, $U(\mathfrak{h})$-free modules, Weyl algebra,  oscillator representation, exponential modules}

\maketitle

\section{Introduction}
Modules over Lie superalgebras are fundamental in both mathematics and physics. The orthosymplectic  superalgebras $\mathfrak{osp}(1|2n)$ are special in that their universal enveloping algebras are domains, which makes their representation theory easier to study. Like all other  Lie (super)algebras, the representation theory of $\mathfrak{osp}(1|2n)$ splits into various categories. Foremost is the category of \emph{weight modules}, whose objects admit a direct sum decomposition into $\mathfrak{h}$-weight spaces for a fixed Cartan subalgebra $\mathfrak{h}$. Weight modules of these orthosymplectic superalgebras are studied in \cite{GGF}, and of their direct limits in \cite{GPS}. Among non-weight modules, recently, one category attracted considerable attention — the category $\mathcal{FR}(n)$ of $U(\mathfrak{h})$-free modules of finite rank $n$. The case of rank $1$ is relatively well understood as the classification of all objects in $\mathcal{FR}(1)$ is established for $\mathfrak{sl}(n{+}1)$ in \cite{Nil1}, for the algebras of Witt type in \cite{TZ1}, \cite{TZ2}, and for $\mathfrak{osp}(1|2n)$ in \cite{CZ}. The case of arbitrary rank is challenging already for $\mathfrak{sl}(2)$, which will be treated in detail in a forthcoming paper.

One special type of modules in $\mathcal{FR}(n)$ are the so-called exponential modules — those that are pull-backs of exponential-type modules over the Weyl algebras $\mathcal D(n)$. In the case of $\mathfrak{sl}(n{+}1)$, it was shown in \cite{GN} that all objects in $\mathcal{FR}(1)$ are tensor exponential modules. Exponential modules for the Smith algebra were investigated in \cite{VSM}.

In this paper we initiate the systematic study of exponential modules over the orthosymplectic superalgebras by looking first  at the case of $\mathfrak{osp}(1|2)$. It is worth noting that there is an abstract classification of all simple modules over $\mathfrak{osp}(1|2)$ in \cite{BO}, but this classification is not explicit and it relates to irreducible elements of a certain Euclidean ring. In contrast, the modules we study are two explicit families of simple $U(\mathfrak h)$-free modules of $\mathfrak{osp}(1|2)$ of arbitrary finite rank that come from a natural $D$-module setting. The starting point is the first Weyl algebra $\mathcal D(1)$ and the two classical oscillator homomorphisms
\[
\Phi_{\pm}:U(\mathfrak{osp}(1|2))\longrightarrow \mathcal D(1),
\]
cf.\ \S\ref{sec:prelim} and \cite{GGF,P}. For any nonconstant $g(x)\in\C[x]$ we consider the exponential $\mathcal D(1)$-module $E(g)=\C[x]e^{g(x)}$ and pull it back along $\Phi_\pm$ to obtain $U(\mathfrak{osp}(1|2))$–modules $E_\pm(g)$. In particular, both $E_\pm(g)$ have fixed $\mathfrak{osp}(1|2)$- and $\mathfrak{sl}(2)$-central characters. When $g$ is even, $E_\pm(g)$ carries a natural $\Z_2$–grading. We summarize our main results below.

\medskip
\noindent\textbf{Main results.}
Let $g(x)=\sum_{j=1}^{n}a_jx^j$ with $n=\deg g\geq 1$ and $a_n\neq 0$.
\begin{itemize}
  \item \emph{$U(\mathfrak h)$-freeness.} We prove
  \[
  E_{\pm}(g)\ \simeq\ \bigoplus_{p=0}^{n-1}\,U(\mathfrak h)\cdot (x^{p}e^{g}),
  \]
  hence $E_{\pm}(g)$ are $U(\mathfrak h)$-modules of rank $n$ (Proposition~\ref{U(h)-decomp}).
  \item \emph{Simplicity.} Both $E_{\pm}(g)$ are simple $U(\mathfrak{osp}(1|2))$-modules (Proposition~\ref{simplicity}).
  \item \emph{Isomorphism classification.} If $s\in\{+,-\}$, then, after the normalization $g_i(0)=0$,
  \[
  E_{s}(g_1)\simeq E_{s}(g_2)\quad\Longleftrightarrow\quad g_1=g_2\ 
  \]
  (Theorem~\ref{thm:isom}). Moreover, $E_{+}(g_1)\not\simeq E_{-}(g_2)$ unless $\deg g_1=\deg g_2=2$, in which case the two are isomorphic precisely in the \emph{integral Fourier-dual} situation, i.e., when $b_2=-1/(4a_2)$ and $b_1=-a_1/(2a_2)$ for $g_1(x)=a_1x+a_2x^2$, $g_2(x)=b_1x+b_2x^2$.
  \item \emph{Explicit $F(X_+,X_-)$–realization.} If $s\in\{+,-\}$, we give companion-type matrices $X_\pm(h)\in\Mat_n(\C[h])$ such that $E_{s}(g)\simeq F(X_+,X_-)$ (Theorem~\ref{explicitrealization}). 
  \item \emph{Concrete bases and closed formulas.} Choosing the $U(\mathfrak h)$–basis $\{x^p e^{g}\}_{p=0}^{n-1}$ yields explicit isomorphisms
  \(
  \Psi_{\pm}:E_{\pm}(g)\xrightarrow{\ \sim\ }\C[h]^{\oplus n}.
  \)
  We describe $\Psi_{\pm}(x^p e^{g})$ both via generating-function ODEs and through closed combinatorial formulas; see \S\ref{subsec:ODE}–\S\ref{subsec:combinatorics}.
\end{itemize}

The paper is organized as follows. Section~\ref{sec:prelim} collects the needed background on $\mathcal D(1)$, the oscillator maps $\Phi_{\pm}$, and central characters. In Section \ref{sec:FRn-matrix} we set up the matrix realization $F(X_+,X_-)$. Section~\ref{sec:exp-mods} introduces the exponential modules $E_{\pm}(g)$ and proves $U(\mathfrak h)$-freeness, simplicity, and the isomorphism theorems. The matrix realization of $E_{\pm}(g)$ is provided in Theorem~\ref{explicitrealization}. Finally, \S\ref{subsec:ODE} and \S\ref{subsec:combinatorics} provide explicit bases and formulas for the $\Psi_{\pm}$–isomorphisms via ODEs and recursive sequences.

\medskip
\noindent{\it Acknowledgements.}  We would like to thank Maria Gorelik for the useful suggestions. D.G. is supported in part by Simons Collaboration 
Grant 855678.

\section{Notation and Conventions}
Throughout this paper, we denote the sets of integers, complex numbers, and nonzero complex numbers by $\Z$, $\C$, and $\C^*$, respectively. For any integer $k$, we write $\Z_{\geq k}$ for the set $\{i\in \Z: i\geq k\}$. Given a Lie (super)algebra $\mathfrak{g}$, its universal enveloping algebra is denoted $U(\mathfrak g)$, and we fix a Cartan subalgebra $\mathfrak h\subset \mathfrak g$. For a ring $R$, we write $\Mat_N(R)$ for the ring of $N\times N$ matrices with entries in $R$. 

All vector spaces and algebras are over $\C$ unless otherwise stated. By $\mathcal D(1)$ we denote the first Weyl algebra, i.e. the algebra of polynomial differential operators on $\C[x]$.  In terms of generators and relations,  $\mathcal D(1)$ is generated by $x$ and $\partial = \partial_x$, subject to the relation
$$
\partial x - x \partial = 1.
$$
By $\mathcal O$ we denote the natural module $\mathbb C[x]$ of $\mathcal D(1)$.

For a module $M$ over an associative unital algebra $A$ and an automorphism $f \in \mbox{Aut}(A)$, by $M^f$ we denote the twisted by $f$ module. Namely, $M^f = M$ as vector spaces, but with $A$-action defined by $a\cdot_f m = f(a)m$.

We define the automorphism
$\sigma:\mathbb{C}[h]\;\to\;\mathbb{C}[h]$ by
$$
\sigma\bigl(f(h)\bigr)\;:=\;f(h-1),
\qquad\text{for every }f(h)\in\mathbb{C}[h].
$$
 For a complex number or polynomial $\alpha$ and $k\in\mathbb Z_{\ge0}$,
\[
\binom{\alpha}{k}:=\frac{\alpha(\alpha-1)\cdots(\alpha-k+1)}{k!},\qquad \binom{\alpha}{0}:=1.
\]

\section{Preliminaries on $\mathfrak{osp}(1|2)$ and $\mathcal D(1)$} \label{sec:prelim}

\subsection{Two automorphisms of ${\mathcal D} (1)$}
We will use two automorphisms of ${\mathcal D} (1)$ - the Fourier transform and the exponentiation.

The \emph{Fourier transform} $\theta$ is the automorphism of ${\mathcal D} (1)$ defined as follows:
\begin{eqnarray*}
\theta(x) = \partial,\quad \theta(\partial) = -x.
\end{eqnarray*}

For any $g \in \C[x]$, the \emph{$g$-exponentiation} $\epsilon_g$ on $\mathcal D(1)$ is defined by: $\epsilon_{g} (x) = x$,  $\epsilon_{g} (\partial) = \partial+ g'(x)$.
Since $\epsilon_{g} = \epsilon_{g+c}$, for any $c \in \mathbb C$, \emph{we will assume that $g(0)=0$ whenever $\epsilon_{g}$ is considered}. Sometimes $\epsilon_g$ is called the shear automorphism defined by $g'(x)$.

\subsection{Basis of  $\mathfrak{osp} (1|2)$}
Throughout the paper, we fix a Cartan subalgebra $\mathfrak h = \mathbb C h$  and a root system $\{ \delta, -\delta, 2\delta, -2\delta\}$ of $\mathfrak{osp} (1|2)$ so that: 
$$\mathfrak{osp} (1|2) = \mathfrak{osp}(1|2)_{\bar{0}} \oplus \mathfrak{osp}(1|2)_{\bar{1}}$$
with
$$\mathfrak{osp}(1|2)_{\bar{0}} = \Span\{h,x_{2\delta},x_{-2\delta}\},\quad \mathfrak{osp}(1|2)_{\bar{1}} = \Span\{x_{\delta},x_{-\delta}\} $$
for which the following superbracket relations hold:
$$
\begin{aligned}
&[h,x_{2\delta}]=2x_{2\delta}, \qquad [h,x_{-2\delta}]=-2x_{-2\delta}, \qquad [x_{2\delta},x_{-2\delta}]=h,\\
&[h,x_{\pm\delta}] = \pm x_{\pm\delta}, \qquad [x_{2\delta},x_{-\delta}] = -\,x_{\delta}, \qquad [x_{-2\delta},x_{\delta}] =- x_{-\delta},\\
&[x_{2\delta},x_{\delta}] = [x_{-2\delta},x_{-\delta}] = 0,\\
&[x_{\delta},x_{\delta}]=2x_{2\delta},\qquad [x_{-\delta},x_{-\delta}]=-2x_{-2\delta},\qquad [x_{\delta},x_{-\delta}]=h.
\end{aligned}
$$

We will identify  $U(\mathfrak{h})$ with $ \C[h]$. In particular, we will often use the following identities:
$$
x_{\delta} f(h) = f(h-1) x_{\delta},\quad x_{-\delta} f(h) = f(h+1) x_{-\delta}  
$$
in $U(\mathfrak{osp}(1|2))$ for any  $f(h) \in \C [h]$. 

The associative algebra $U(\mathfrak{osp}(1|2))$ is generated by $x_{\delta}, x_{-\delta}, h$ subject to the three relations
\begin{equation}\label{rel-osp}
x_{\delta}x_{-\delta} + x_{-\delta}x_{\delta} =h, \; hx_{\delta} - x_{\delta}h = x_{\delta},\; hx_{-\delta} - x_{-\delta}h = -x_{-\delta}.
\end{equation}
\subsection{The centers of $U(\mathfrak{osp}(1|2))$ and $U(\mathfrak{osp}(1|2))_{\bar{0}}$}
We have two Casimir elements - the Casimir element $C$ of $\mathfrak{osp}(1|2)_{\bar{0}}$, and  the Casimir element $\Omega$ of $\mathfrak{osp}(1|2)$  defined by
$$
C:=x_{-2\delta}x_{2\delta}+\frac{1}{2}h+\frac{1}{4}h^2
   =x_{2\delta}x_{-2\delta}-\frac{1}{2}h+\frac{1}{4}h^2,
\qquad
\Omega:=C-\frac{1}{4}\bigl(x_{\delta}x_{-\delta}-x_{-\delta}x_{\delta}\bigr).
$$
More precisely, the center of $Z(\mathfrak{osp}(1|2)_{\bar 0})$ of $U(\mathfrak{osp}(1|2)_{\bar 0})$ coincides with the polynomial algebra $\C[C]$, while the center of $U(\mathfrak{osp}(1|2))$  is $Z(\mathfrak{osp}(1|2)) = \C[\Omega]$. A direct computation shows that
$$
4C^2 - (8\Omega - 1)C + 2\Omega(2\Omega - 1) = 0.
$$
For details we refer the reader to \cite{P}. Note that the basis $\{F,G,Y,E_+,E_-\}$ of $\mathfrak{osp}(1|2)$ used in Pinczon's paper is slightly different. Namely, we have  $$x_{2\delta}=F,x_{-2\delta}=G, \frac{1}{2}h = Y,\quad \frac{1}{\sqrt{2}}x_{\delta}=E_+, \frac{1}{\sqrt{2}}x_{-\delta}=E_-.$$ 

In this paper we will focus on modules for which $C = -\frac{3}{16} \Id, \Omega = -\frac{1}{16} \Id$, see Lemma \ref{lem-cas}.

\subsection{Oscillator homomorphisms}
The Lie superalgebra $\mathfrak{osp}(1|2)$ can be embedded into $\mathcal D(1)$ in two natural ways:
$x_{\delta} \mapsto x, \; x_{\delta} \mapsto \partial$;
and 
$x_{\delta} \mapsto \partial, \; x_{\delta} \mapsto -x$. After adjusting the scalars in a way that is convenient for our setting, we consider the two  homomorphisms $\Phi_+$ and $\Phi_-$ from $U(\mathfrak{osp}(1|2))$ to $\mathcal D(1)$ defined as follows.
\[
\Phi_{+}:\quad
x_{\delta}\mapsto \tfrac{1}{\sqrt{2}}\,x,\ \ 
x_{-\delta}\mapsto \tfrac{1}{\sqrt{2}}\,\partial,\ \ 
h\mapsto x\partial+\tfrac{1}{2},\ \ 
x_{2\delta}\mapsto \tfrac{1}{2}\,x^{2},\ \ 
x_{-2\delta}\mapsto -\tfrac{1}{2}\,\partial^{2}.
\]
\[
\Phi_{-}:\quad
x_{\delta}\mapsto \tfrac{1}{\sqrt{2}}\,\partial,\ \ 
x_{-\delta}\mapsto -\tfrac{1}{\sqrt{2}}\,x,\ \ 
h\mapsto -(x\partial+\tfrac{1}{2}),\ \ 
x_{2\delta}\mapsto \tfrac{1}{2}\,\partial^{2},\ \ 
x_{-2\delta}\mapsto -\tfrac{1}{2}\,x^{2}.
\]

The homomorphism $\Phi_+$ with the exact same choice of constants was considered, for example, in \cite{GGF} (see  Proposition 4.1). Observe that we have $\Phi_- = \theta \Phi_+$ (hence, $\Phi_+ = -\theta \Phi_-$).

By direct computation one verifies the following.

\begin{lemma} \label{lem-cas}
The images of the Casimir elements $C$ and $\Omega$ under $\Phi_{\pm}$ are given by
$$\Phi_{\pm}(C) = -\frac{3}{16},  \quad\text{and}\quad \Phi_{\pm}(\Omega) = -\frac{1}{16} .$$
\end{lemma}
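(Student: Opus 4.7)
The plan is to reduce the statement to the $\Phi_+$ case and then verify both identities by direct computation in the Weyl algebra, using only the relation $\partial x=x\partial+1$.

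First I would invoke the observation made earlier that $\Phi_-=\theta\Phi_+$. Since $\theta$ is an algebra automorphism of $\mathcal D(1)$ that fixes every scalar, the identities for $\Phi_-$ follow immediately from those for $\Phi_+$ once one checks that $\Phi_+(C)$ and $\Phi_+(\Omega)$ lie in $\C\cdot 1\subset\mathcal D(1)$. Therefore the real content is to compute $\Phi_+(C)$ and $\Phi_+(\Omega)$.

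For $C=x_{-2\delta}x_{2\delta}+\tfrac12 h+\tfrac14 h^2$, I would expand each of the three summands under $\Phi_+$:
\[
\Phi_+(x_{-2\delta}x_{2\delta})=-\tfrac14\partial^{2}x^{2},\qquad
\Phi_+(\tfrac12 h)=\tfrac12\bigl(x\partial+\tfrac12\bigr),\qquad
\Phi_+(\tfrac14 h^{2})=\tfrac14\bigl(x\partial+\tfrac12\bigr)^{2}.
\]
The two bookkeeping computations needed are $\partial^{2}x^{2}=x^{2}\partial^{2}+4x\partial+2$ and $(x\partial)^{2}=x^{2}\partial^{2}+x\partial$, both obtained by iterating $\partial x=x\partial+1$. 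Substituting, the $x^{2}\partial^{2}$-coefficients cancel (they sum to $-\tfrac14+\tfrac14$), as do the $x\partial$-coefficients (they sum to $-1+\tfrac12+\tfrac12$), leaving the constant
\[
-\tfrac12+\tfrac14+\tfrac1{16}=-\tfrac{3}{16},
\]
which is the desired value of $\Phi_+(C)$.

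For $\Omega=C-\tfrac14\bigl(x_{\delta}x_{-\delta}-x_{-\delta}x_{\delta}\bigr)$, I would compute the correction term directly: $\Phi_+(x_{\delta}x_{-\delta}-x_{-\delta}x_{\delta})=\tfrac12(x\partial-\partial x)=-\tfrac12$, so $\Phi_+(\Omega)=-\tfrac{3}{16}+\tfrac{1}{8}=-\tfrac{1}{16}$. The main potential obstacle is merely sign-tracking: the minus sign in $\Phi_+(x_{-2\delta})=-\tfrac12\partial^{2}$ and the proper left-to-right order of $x$'s and $\partial$'s must be respected throughout, but there is no conceptual difficulty—the lemma is a bookkeeping verification whose point is to pin down the central characters that the modules $E_\pm(g)$ will carry.
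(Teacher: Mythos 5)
Your computation is correct and matches the paper's approach: the paper simply states that the lemma follows ``by direct computation,'' and your expansion of $\partial^2x^2$, $(x\partial)^2$, and the commutator term supplies exactly that verification, with the values $-\tfrac{3}{16}$ and $-\tfrac{1}{16}$ checking out. The reduction of the $\Phi_-$ case to $\Phi_+$ via the scalar-fixing automorphism $\theta$ (using the paper's own observation $\Phi_-=\theta\Phi_+$) is a clean and valid economy.
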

\begin{remark}
If we equip $\mathcal D(1)$ with the $\mathbb Z_2$-grading in which $x$ and $\partial$ are odd, then $\mathcal D(1)$ is the superalgebra denoted as $Cl(0|1)$ in \cite{GPS}; and as $A_{0|1}^+$ in \cite{HS}. Then, the algebra homomorphism $\Phi_{\pm}$ become a homomorphism of associative superalgebras. Since the exponential modules that we study are not $\Z_2$-graded in general (see \S \ref{sebsec:graded}), the $\mathbb Z_2$–grading of $\mathcal D(1)$ will not be used.
\end{remark}
\section{$F(X_+,X_-)$-realization of  $U(\h)$-free modules of rank $n$} \label{sec:FRn-matrix}

\subsection{The category of $U(\h)$-free modules of finite rank over $U(\mathfrak{osp} (1|2))$}

Let $n \in \mathbb{Z}_{>0}$. Define $\mathcal{FR}(n)$ to be the full subcategory of
$U(\mathfrak{osp}(1|2))\text{-mod}$ whose objects are
$U(\mathfrak{osp}(1|2))$-modules $M$ such that
$$
\Res^{U(\mathfrak{osp}(1|2))}_{U(\mathfrak{h})} M
\;\simeq\; U(\mathfrak{h})^{\oplus n}\; \simeq\; \C[h]^{\oplus n}.
$$
Morphisms in $\mathcal{FR}(n)$ are not necessarily parity-preserving.

Throughout the paper,  we will often identify the underlying space of any module $M$ in $\mathcal{FR}(n)$ with 
$M  =\C[h]^{\oplus n}$ and assume that the action of $h$ is by multiplication. In explicit terms, every $m \in M$ is  a column vector $$m = {\bf f}(h) = \bigl[f_1(h)\;\;\cdots\;\;f_n(h)\bigr]^{\mathsf T}\quad \text{with}\quad f_i(h) \in \C[h]$$
and 
$$h \; \cdot \; \bigl[f_1(h)\;\;\cdots\;\; f_{n}(h)\bigr]^{\mathsf T} \;=\; \bigl[hf_1(h)\;\;\cdots\;\;hf_{n}(h)\bigr]^{\mathsf T}.$$

\begin{proposition} \label{prop-mat-free}
Let  $M= \C[h]^{\oplus n}$ be in $\mathcal{FR}(n)$ such that  $h$ acts on the elements of $M$ by multiplication. Then  there exist matrices $X_+(h)$ and $X_-(h)$ in $\mbox{Mat}_n(\C[h])$ with  
\begin{equation}\label{eq-mat-free}
X_{+}(h)\,X_{-}(h-1)\;+\;X_{-}(h)\,X_{+}(h+1)\;=\;h\I_n 
\end{equation}
(in short, $X_+\sigma X_- + X_-\sigma^{-1} X_+ = h\I_n$).  such that the action of the generators $\mathfrak{osp}(1|2)$ on $M$ is given by 
$$
x_{\delta} \cdot {\bf f}(h) = X_+(h)\sigma ({\bf f}(h)),\;  x_{-\delta} \cdot {\bf f}(h) = X_-(h)\sigma^{-1} ({\bf f}(h)), \; {\bf f}(h) \in \C[h]^{\oplus n}.$$
(in short, $x_{\delta} \mapsto X_+\sigma, x_{-\delta} \mapsto X_-\sigma^{-1}$). In particular,
$$
x_{2\delta} \cdot {\bf f}(h) = X_+(h)X_+(h-1)\sigma^2 ({\bf f}(h)), \; x_{-2\delta} \cdot {\bf f}(h) = -X_-(h)X_-(h+1)\sigma^{-2} ({\bf f}(h)).$$

\end{proposition}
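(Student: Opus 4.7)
The plan is to use the $U(\mathfrak{h})$-freeness of $M$ to extract matrices $X_\pm(h)\in\mathrm{Mat}_n(\C[h])$ representing the odd generators, and then to show that the three defining relations \eqref{rel-osp} of $U(\mathfrak{osp}(1|2))$ translate into (i) the $\sigma$-twist built into the action, and (ii) the single quadratic identity \eqref{eq-mat-free}.

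First, fix the standard basis $e_1,\dots,e_n$ of $\C[h]^{\oplus n}$ over $U(\mathfrak{h})\simeq\C[h]$. Since $M$ is $U(\mathfrak{h})$-free and $x_{\delta}\cdot e_i$ is some element of $M$, there exist unique polynomials $(X_+)_{ji}(h)\in\C[h]$ such that $x_{\delta}\cdot e_i=\sum_j (X_+)_{ji}(h)\,e_j$. Define $X_+(h)\in\mathrm{Mat}_n(\C[h])$ accordingly, and define $X_-(h)$ analogously from $x_{-\delta}\cdot e_i$. Using the relation $x_{\delta}f(h)=f(h-1)x_{\delta}$ in $U(\mathfrak{osp}(1|2))$, for any $\mathbf{f}(h)=\sum f_i(h)e_i$ we compute
\[
x_{\delta}\cdot\mathbf{f}(h)=\sum_i f_i(h-1)\,x_{\delta}\cdot e_i=\sum_{i,j}(X_+)_{ji}(h)\,f_i(h-1)\,e_j=X_+(h)\,\sigma(\mathbf{f}(h)),
\]
which is the claimed form $x_{\delta}\mapsto X_+\sigma$. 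The analogous computation with $x_{-\delta}f(h)=f(h+1)x_{-\delta}$ yields $x_{-\delta}\mapsto X_-\sigma^{-1}$. Note that the relations $hx_{\pm\delta}-x_{\pm\delta}h=\pm x_{\pm\delta}$ impose no additional constraint on $X_\pm$: they are equivalent to the commutation of $\sigma^{\pm 1}$ with multiplication by $h$, which holds automatically.

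Second, impose the remaining relation $x_{\delta}x_{-\delta}+x_{-\delta}x_{\delta}=h$. A direct calculation using $\sigma(X_-(h))=X_-(h-1)$ and $\sigma^{-1}(X_+(h))=X_+(h+1)$ (applied entrywise) gives
\[
x_{\delta}x_{-\delta}\cdot\mathbf{f}(h)=X_+(h)\,\sigma\bigl(X_-(h)\,\sigma^{-1}(\mathbf{f}(h))\bigr)=X_+(h)X_-(h-1)\,\mathbf{f}(h),
\]
and similarly $x_{-\delta}x_{\delta}\cdot\mathbf{f}(h)=X_-(h)X_+(h+1)\,\mathbf{f}(h)$. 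Summing and using that $h$ acts as multiplication yields exactly \eqref{eq-mat-free}. Finally, the formulas for $x_{\pm 2\delta}$ follow from the superbracket identities $[x_{\delta},x_{\delta}]=2x_{2\delta}$ and $[x_{-\delta},x_{-\delta}]=-2x_{-2\delta}$, i.e.\ $x_{2\delta}=x_{\delta}^{2}$ and $x_{-2\delta}=-x_{-\delta}^{2}$, applied iteratively to $\mathbf{f}(h)$, which produces the stated $X_+(h)X_+(h-1)\sigma^2$ and $-X_-(h)X_-(h+1)\sigma^{-2}$.

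There is no real obstacle here; the content is essentially a translation. The only point that requires care is correctly tracking the $\sigma$-twists when composing operators — in particular, that $\sigma$ must be applied entrywise to a matrix of polynomials before multiplying with the next factor — so that the quadratic identity \eqref{eq-mat-free} comes out with the exact index shifts $h-1$ and $h+1$ shown. Once this bookkeeping is carried out, the proposition is immediate from \eqref{rel-osp}.
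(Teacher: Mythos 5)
Your proof is correct and follows essentially the same route as the paper's: read off $X_{\pm}(h)$ from the action on a $U(\mathfrak h)$-basis, use $x_{\pm\delta}f(h)=f(h\mp1)x_{\pm\delta}$ to obtain the $\sigma^{\pm1}$-twisted form, and then translate the single relation $x_{\delta}x_{-\delta}+x_{-\delta}x_{\delta}=h$ into \eqref{eq-mat-free}, with the $x_{\pm2\delta}$ formulas coming from $x_{2\delta}=x_{\delta}^{2}$ and $x_{-2\delta}=-x_{-\delta}^{2}$. Your write-up is in fact somewhat more detailed than the paper's on the bookkeeping of the entrywise $\sigma$-shifts.
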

\begin{proof}
The matrices $X_{\pm}(h)$ are nothing else but the matrices of $x_{\pm \delta} \in \End(\C[h]^{\oplus n})$ in the standard basis. On the other hand, using $[h,x_{\delta}] = x_{\delta}$ one finds $x_{\delta}\cdot h{\bf f}(h) = (h-1)x_{\delta}\cdot {\bf f}(h)$. This easily leads to $x_{\delta} \cdot {\bf f}(h) = X_+(h)\sigma ({\bf f}(h))$.  The  proof of  $x_{-\delta} \cdot {\bf f}(h) = X_-(h)\sigma^{-1} ({\bf f}(h))$ uses the same  reasoning. The equation \eqref{eq-mat-free} follows from the relation $[x_{\delta}, x_{-\delta}]=h$. Since all relations of $\mathfrak{osp}(1|2)$ are  \eqref{rel-osp}, the proof is complete.
\end{proof}

\begin{definition} \label{def-x}
The module in $\mathcal{FR}(n)$ determined by the pair of matrices $(X_{+}(h), X_{+}(h))$ subject to the conditions \eqref{eq-mat-free} will be denoted by $F(X_+(h),X_-(h))$, or shortly by $F(X_+,X_-)$.
\end{definition}

\begin{remark}
The isomorphism class of $F(X_+,X_-)$ depends on the
$\GL_n(\C[h])$–action by $\sigma$-twisted conjugations:
\[
W(h)\cdot(X_+,X_-)
:=\bigl(W(h)^{-1}\,X_+(h)\,\sigma(W(h)),\; W(h)^{-1}\,X_-(h)\,\sigma^{-1}(W(h))\bigr).
\]
To be precise,  
\[
F(X_+,X_-)\simeq F(X'_+,X'_-)
\;\;\Longleftrightarrow\;\;
\exists\,W(h)\in GL_n(\C[h])\ \text{with}\ 
(X'_+,X'_-)=W(h)\cdot(X_+,X_-).
\]
This is precisely the change of $U(\h)$–basis on $\C[h]^{\oplus n}$ defined  by $v\mapsto W(h)\,v$.
\end{remark}

\section{Exponential modules of $U(\mathfrak{osp}(1|2))$ as $U(\mathfrak h)$-free modules} \label{sec:exp-mods}

\subsection{Exponential modules of $\mathcal D(1)$} Let $g(x) \in \C[x]$. The $\mathcal D(1)$-module with underlying space $E(g)= \C[x]\,e^{g(x)}$ and the natural $\mathcal D(1)$-action will be called the \emph{$g$-exponential module of $\mathcal D(1)$}. Equivalently, we can consider the module $E(g)$ with underlying space $\C[x]$ and action of the generators via 
$$
x \mapsto x, \; \partial \mapsto \partial + g'(x),
$$ 
i.e., $E(g) \simeq \mathcal O^{\epsilon_g}$. As mentioned earlier, we assume $g(0)=0$.

\subsection{Exponential modules of $U(\mathfrak{osp}(1|2))$} 

\begin{definition}\label{generalexp-module}
Let $g(x)\in\C [x]$.  The $\mathcal D(1)$-module $E(g) = \C[x]\,e^{g(x)}$ considered as a $ U(\mathfrak{osp}(1|2))$-module through $\Phi_{\pm}$ will be denoted by $E_{\pm}\left(g(x)\right)$ and will be referred as \emph{exponential modules} over $U(\mathfrak{osp}(1|2))$. Equivalently, we may consider $E_{\pm}\left(g(x)\right) = \C[x]$ with actions of the generators as follows
\[ E_{+}(g):\quad x_{\delta}\mapsto \tfrac{1}{\sqrt{2}}\,x,\ \ 
x_{-\delta}\mapsto \tfrac{1}{\sqrt{2}}\,(\partial + g'(x))\]
\[ E_{-}(g):\quad x_{\delta}\mapsto \tfrac{1}{\sqrt{2}}(\partial + g'(x)),\ \ 
x_{-\delta}\mapsto - \tfrac{1}{\sqrt{2}}\, x.\]

\end{definition}

\subsection{$\mathbb Z_2$-graded exponential modules} \label{sebsec:graded}
An $\mathfrak{osp}(1|2)$- module $M$ is called  an \emph{$\mathfrak{osp}(1|2)$-supermodule} if $M = M_{\bar{0}} \oplus M_{\bar{1}}$ and $x \cdot m \in M_{\alpha + \beta}$ if $x \in \mathfrak{osp}(1|2)_{\alpha}$ and $m \in M_{\beta}$. We similarly define the notion of a \emph{$\mathcal D(1)$-supermodule} using the $\mathbb Z_2$-grading of $\mathcal D(1)$ for which $\deg(x) = \deg(\partial) = \bar{1}$.
\begin{proposition}
The modules $E_{\pm}(g(x))$  are $\mathfrak{osp}(1|2)$-supermodules if and only if $g(x)$ is even, i.e. if $g(x)\in\C[x^{2}]$. In the latter case, 
    $$E_{\pm}(g(x))_{\bar 0}:=\C[x^{2}]\,e^{g(x)},\;
E_{\pm}(g(x))_{\bar 1}:=x\,\C[x^{2}]\,e^{g(x)}.$$
\end{proposition}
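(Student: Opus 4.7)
The proof splits into two implications, and in both directions the work is reduced to controlling the odd generators $x_{\pm\delta}$.

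The common reduction is that $U(\mathfrak{osp}(1|2))$ is generated as an associative algebra by $x_{\delta}$ and $x_{-\delta}$ alone, because $h=x_{\delta}x_{-\delta}+x_{-\delta}x_{\delta}$, $x_{2\delta}=x_{\delta}^{2}$, and $x_{-2\delta}=-x_{-\delta}^{2}$ (relations~\eqref{rel-osp} together with $[x_{\pm\delta},x_{\pm\delta}]=\pm 2x_{\pm 2\delta}$). Consequently a $\Z_{2}$-decomposition $V_{\bar 0}\oplus V_{\bar 1}$ defines an $\mathfrak{osp}(1|2)$-supermodule structure on $E_{\pm}(g)$ if and only if both $x_{\delta}$ and $x_{-\delta}$ act by odd linear operators.

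For the sufficient direction, assume $g\in\C[x^{2}]$, so that $g'(x)\in x\C[x^{2}]$. Take the proposed decomposition $\C[x]e^{g}=\C[x^{2}]e^{g}\oplus x\C[x^{2}]e^{g}$. Multiplication by $x$ manifestly interchanges the two summands, handling $x_{\delta}$ for $E_{+}(g)$ and $x_{-\delta}$ for $E_{-}(g)$. The operator $\partial+g'(x)$ is a sum of two parity-swapping pieces: $\partial$ lowers the monomial degree by one, and multiplication by the odd polynomial $g'(x)$ shifts every monomial degree by an odd integer; this handles $x_{-\delta}$ for $E_{+}(g)$ and $x_{\delta}$ for $E_{-}(g)$.

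For the necessary direction, suppose $E_{+}(g)=\C[x]$ admits \emph{some} $\Z_{2}$-grading making it an $\mathfrak{osp}(1|2)$-supermodule. The key step is a rigidity lemma: any $\Z_{2}$-grading on $\C[x]$ for which multiplication by $x$ is odd is, up to parity flip, the standard monomial-parity grading $(\C[x^{2}],x\C[x^{2}])$. To see this, let $J:=p_{\bar 0}-p_{\bar 1}$ be the parity involution; oddness of $M_{x}$ is equivalent to $J(xf)=-xJ(f)$. Setting $\phi(x):=J(1)\in\C[x]$ and iterating yields $J(x^{k})=(-1)^{k}x^{k}\phi(x)$, hence by linearity $J(f)(x)=\phi(x)f(-x)$. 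Imposing $J^{2}=\Id$ gives $\phi(x)\phi(-x)=1$; as $\phi$ is a polynomial, this forces $\phi$ to be constant with $\phi^{2}=1$, i.e.\ $\phi=\pm 1$, proving the claim.

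With the grading thus constrained to standard parity (or its flip), the oddness of $x_{-\delta}=\tfrac{1}{\sqrt{2}}(\partial+g'(x))$, combined with the already-odd $\partial$, forces multiplication by $g'(x)$ itself to swap monomial parity. Splitting $g'=g'_{\text{even}}+g'_{\text{odd}}$, multiplication by $g'_{\text{even}}$ preserves monomial parity while multiplication by $g'_{\text{odd}}$ swaps it, so $g'_{\text{even}}=0$, giving $g'(x)\in x\C[x^{2}]$ and hence $g\in\C[x^{2}]$ (using $g(0)=0$). The argument for $E_{-}(g)$ is identical. The main obstacle—and the only nontrivial step—is the rigidity classification of $\Z_{2}$-gradings compatible with odd multiplication by $x$; once that is in hand, both implications collapse to routine parity bookkeeping.
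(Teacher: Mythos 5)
Your proof is correct. Both you and the paper encode a compatible $\Z_2$-grading as a linear involution anticommuting with the odd generators, and both arrive at the same ansatz $J(f(x)e^{g})=a(x)f(-x)e^{g}$ with $a:=J(e^{g})/e^{g}$ from the anticommutation with $x$; your preliminary reduction to the oddness of $x_{\pm\delta}$ alone (via $h=x_{\delta}x_{-\delta}+x_{-\delta}x_{\delta}$, $x_{2\delta}=x_{\delta}^{2}$, $x_{-2\delta}=-x_{-\delta}^{2}$) matches the paper's reduction to the super $\mathcal D(1)$-module condition. The decisive step differs, however. The paper does not use $J^{2}=\Id$ in the ``only if'' direction: it feeds the anticommutation with $\partial$ into the ansatz, obtains the first-order ODE $a'(x)+a(x)\bigl(g'(x)+g'(-x)\bigr)=0$, solves it as $a=C\exp\bigl(g(-x)-g(x)\bigr)$, and concludes from polynomiality of $a$ that $g$ is even. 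You instead invoke $J^{2}=\Id$ first, which gives $a(x)a(-x)=1$ and hence $a=\pm1$ by a degree count --- your rigidity lemma pinning the grading down to standard monomial parity --- and only afterwards extract $g'_{\mathrm{even}}=0$ by parity bookkeeping on the multiplication operator $g'(x)$. Your route avoids exponentials of polynomials entirely and yields the extra information that the grading in the statement is essentially the unique admissible one; the paper's route gets the condition on $g$ in a single stroke from the ODE. Both arguments are complete.
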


\begin{proof} We prove the equivalent statement that $M = \C[x]e^g$ is a super $\mathcal D(1)$-module if and only if  $g$ is even. Note that an existence of a  $\mathbb Z_2$-grading on $M$ compatible with the standard $\mathbb Z_2$-grading of $\mathcal D(1)$ is equivalent to an existence of a  linear involution $E: M\to M$ with
\begin{equation*}
E^2=1,\qquad Ex=-xE,\qquad E\partial=-\partial E.
\end{equation*}
We first prove the ``only if'' direction. Let $a(x)\in\C[x]$ be such that
$E(e^{g})=a(x)e^{g}$. Using $Ex=-xE$, we obtain
\begin{equation*}
E\bigl(f(x)e^{g}\bigr)=f(-x)\,a(x)\,e^{g}.
\end{equation*}
for all $f\in\C[x]$. Now applying   $E\partial=-\partial E$ on $e^{g(x)}$ and canceling $e^g$ leads to  the first-order ODE
\begin{equation*}
 a'(x)+a(x)\bigl(g'(x)+g'(-x)\bigr)=0.
\end{equation*}
Hence we have
\begin{equation*}
 a(x)=C\,\exp\bigl(\,g(-x)-g(x)\,\bigr)\qquad (C\in\C^*).
\end{equation*}
But  $a(x)\in\C[x]$ forces $g(-x)-g(x) = 0$.

   For the ``if'' direction, we define the involution $E$ by $E(f(x)e^g) = f(-x)e^g$. It is easy to check that $Ex=-xE$ and $E\partial=-\partial E$, and also, that this leads to the grading defined in the statement.
\end{proof}

\medskip

\subsection{Exponential modules as modules in $\mathcal{FR}(n)$}

\begin{proposition}\label{U(h)-decomp}
Let $g(x)$ be of degree $n\geq 1$.
Then
\[
E_{\pm}(g)=\bigoplus_{p=0}^{n-1}U(\mathfrak h)\cdot(x^{p}e^{g})
\]
as $U(\mathfrak h)$-modules. In particular, $E_{\pm}\left(g(x)\right)\in  \mathcal{FR}(n)$, i.e.,  $E_{\pm}\left(g(x)\right)$ are $U(\mathfrak{h})$-free modules of rank $n$ over $U\left(\mathfrak{osp}(1|2)\right)$.
\end{proposition}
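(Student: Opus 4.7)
The plan is to reduce both the spanning statement and the independence statement to an explicit degree analysis of the action of $h$. Using $\Phi_+(h)=x\partial+\tfrac12$ and $g(x)=\sum_{j=1}^{n}a_jx^j$, a direct computation gives
\[
h\cdot(x^{p}e^{g})\;=\;(p+\tfrac12)\,x^{p}e^{g}\;+\;\sum_{j=1}^{n} j\,a_j\,x^{p+j}e^{g},
\]
whose top-degree contribution (in the natural basis $\{x^{m}e^{g}\}_{m\ge 0}$ of $\C[x]e^{g}$) is $n a_n\,x^{p+n}e^{g}$ with $na_n\neq 0$. The formula for $E_{-}(g)$ is the same up to an overall sign, so the argument will be uniform. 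I would first record this identity as the main computational input.

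For the spanning, I would argue by induction on $m$ that $x^{m}e^{g}\in\sum_{p=0}^{n-1}\C[h]\cdot(x^{p}e^{g})$. The cases $0\le m\le n-1$ are trivial. For $m\ge n$, set $p':=m-n\in\Z_{\ge 0}$ and rearrange the displayed identity to solve for $x^{p'+n}e^{g}=x^{m}e^{g}$ in terms of $h\cdot(x^{p'}e^{g})$, the term $x^{p'}e^{g}$, and the intermediate terms $x^{p'+1}e^{g},\ldots,x^{p'+n-1}e^{g}$; since $na_n$ is invertible in $\C$, this is legal, and by the inductive hypothesis each of those intermediate vectors already lies in $\sum_{p=0}^{n-1}\C[h]\cdot(x^{p}e^{g})$. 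This gives $E_{\pm}(g)=\sum_{p=0}^{n-1}\C[h]\cdot(x^{p}e^{g})$.

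For the directness of the sum, suppose $\sum_{p=0}^{n-1}f_p(h)\cdot(x^{p}e^{g})=0$ with $f_p\in\C[h]$ not all zero. The key observation is that the linear operator $T\colon\C[x]\to\C[x]$ defined by $T(q)\,e^{g}:=h\cdot(q\,e^{g})$ raises the degree of a polynomial by exactly $n$ and multiplies its leading coefficient by $\pm n a_n$. Consequently, $T^{k}(x^{p})$ has degree $p+kn$ with nonzero leading coefficient, so $f_p(h)\cdot(x^{p}e^{g})$ corresponds (after dividing out $e^{g}$) to a polynomial in $\C[x]$ of exact degree $p+n\deg f_p$ whenever $f_p\neq 0$. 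These degrees lie in the residue class $p\pmod n$, hence are pairwise distinct as $p$ ranges over $\{0,1,\ldots,n-1\}$. No cancellation among the top degrees is possible, and we conclude $f_p=0$ for every $p$, which establishes linear independence.

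The conclusion $E_{\pm}(g)\in\mathcal{FR}(n)$ is immediate from the two preceding steps, since the action of $h$ on each summand $\C[h]\cdot(x^{p}e^{g})$ is by multiplication via the chosen generator. I do not anticipate a genuine obstacle: the whole argument is essentially a leading-term bookkeeping tied to the fact that on the $e^{g}$-twisted space, $h$ no longer preserves the natural $\C[x]$-degree filtration but shifts it by exactly $n$, and this mod-$n$ shift is precisely what separates the $n$ cyclic generators.
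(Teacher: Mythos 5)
Your proposal is correct and follows essentially the same route as the paper: the same computation of $h\cdot(x^{p}e^{g})$, the same inductive reduction of $x^{m}e^{g}$ for $m\ge n$ using the invertibility of $na_n$, and the same leading-term argument for directness based on the exact degree $p+n\deg f_p$ and the distinctness of these degrees modulo $n$. The only cosmetic difference is that you package the degree-shift observation as an operator $T$ raising degree by exactly $n$, where the paper iterates its reduction identity; the content is identical.
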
 

\begin{proof} We prove the statement for $E_+(g(x))$; the case  of $E_-(g(x))$ is analogous. Write $g(x)=\sum_{j=1}^n a_j x^j$ with $a_n\ne0$. Using $\Phi_+(h)=x\partial+\tfrac12$ and $\partial(e^{g})=g'(x)e^{g}$, for all $k\ge0$,
\[
 h\cdot\bigl(x^k e^{g}\bigr)=(k+\tfrac12)x^k e^{g}+\sum_{j=1}^n j a_j\,x^{k+j} e^{g}.
\]
Taking $k=\ell-n$ ($\ell\ge n$) and solving for $x^{\ell}e^{g}$ gives
\begin{equation}  \label{eq:red+} 
 x^{\ell}e^{g}=\frac{h-\ell+n-\tfrac12}{n a_n}\,x^{\ell-n}e^{g}-\sum_{j=1}^{n-1}\frac{j a_j}{n a_n}\,x^{\ell-n+j}e^{g},\qquad \ell\ge n.
\end{equation}

Hence,  by induction on $\ell\ge0$,  $x^{\ell}e^{g}$ is in the $U(\mathfrak h)$–span of $\{x^{p}e^{g}\}_{p=0}^{n-1}$. Thus
\[
E_{+}(g)=\sum_{p=0}^{n-1}U(\mathfrak h)\cdot(x^{p}e^{g}).
\]

 It remains to show that the sum is direct. Suppose $\sum_{p=0}^{n-1} f_p(h)\,x^{p}e^{g}=0$ with $f_p\in\C[h]$. Define $\deg_{x}\bigl(f(h)\,x^{p}e^{g}\bigr):=p+n\deg f$. Iterating \eqref{eq:red+} we see that the largest $x$–degree among all summands of $f_p(h)\,x^{p}e^{g}$ indeed equals $p+n\deg f_p$. Assume that there is at least one $p$ such that $f_p\neq 0$ and choose $r$ that maximizes $p+n\deg f_p$ among all $p$ with $f_p\ne0$. It is easy to see that such $r$ is unique. The term $f_r(h)\,x^{r}e^{g}$ contributes a nonzero multiple of $x^{D}e^{g}$ with $D=r+n\deg f_r$, while all other summands have largest $x$–degree strictly smaller than $D$. This leads to a contradiction, hence all $f_p=0$, and the sum is direct.
\end{proof}

\section{Simplicity and Isomorphism theorems}

\subsection{Simplicity}
\begin{proposition}\label{simplicity}

Let $\deg g \geq 1$.
Then $E_{\pm}\left(g\right)$ are simple $U\left(\mathfrak{osp}(1|2)\right)$-modules.
\end{proposition}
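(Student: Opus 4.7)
The plan is to reduce the simplicity of $E_{\pm}(g)$ as a $U(\mathfrak{osp}(1|2))$-module to the simplicity of $E(g)$ as a $\mathcal{D}(1)$-module, and then to exploit that $E(g)$ is nothing other than the natural module twisted by an automorphism of $\mathcal{D}(1)$.

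The key observation I would make first is that both oscillator homomorphisms $\Phi_{\pm}:U(\mathfrak{osp}(1|2))\to\mathcal{D}(1)$ are \emph{surjective}. Indeed, from the explicit formulas in \S\ref{sec:prelim}, $\Phi_{+}(\sqrt{2}\,x_{\delta})=x$ and $\Phi_{+}(\sqrt{2}\,x_{-\delta})=\partial$, while $\Phi_{-}(\sqrt{2}\,x_{\delta})=\partial$ and $\Phi_{-}(-\sqrt{2}\,x_{-\delta})=x$. Since $\mathcal{D}(1)$ is generated as an associative algebra by $\{x,\partial\}$, and $\mathrm{Im}(\Phi_{\pm})$ is an associative subalgebra containing both generators, each $\Phi_{\pm}$ is onto.

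By Definition~\ref{generalexp-module}, $E_{\pm}(g)$ is the pull-back of the $\mathcal{D}(1)$-module $E(g)=\C[x]e^{g(x)}$ along the surjection $\Phi_{\pm}$. Because $\Phi_\pm$ is surjective, a subspace of $E_{\pm}(g)$ is $U(\mathfrak{osp}(1|2))$-stable if and only if it is $\mathcal{D}(1)$-stable, so simplicity of $E_{\pm}(g)$ as a $U(\mathfrak{osp}(1|2))$-module is equivalent to simplicity of $E(g)$ as a $\mathcal{D}(1)$-module. But as recorded in \S\ref{sec:exp-mods}, $E(g)\simeq \mathcal{O}^{\epsilon_{g}}$, and twisting by the automorphism $\epsilon_{g}\in\Aut(\mathcal{D}(1))$ preserves the submodule lattice. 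It therefore suffices to recall the classical fact that the natural module $\mathcal{O}=\C[x]$ is simple over $\mathcal{D}(1)$: any nonzero $f\in\C[x]$ of degree $d$ yields a nonzero scalar via $\partial^{d}f$, and scalars generate $\C[x]$ under multiplication by powers of $x$.

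\textbf{Main obstacle.} There is no substantive obstacle: the proof collapses to the standard simplicity of $\mathcal{O}$ once surjectivity of $\Phi_{\pm}$ is observed. The only step that requires any care is the identification of submodule lattices under pull-back along the surjective map $\Phi_{\pm}$, which is a purely formal consequence of surjectivity. Notably, the hypothesis $\deg g\geq 1$ plays no role in the argument — it is needed for the rank statement (Proposition~\ref{U(h)-decomp}) but not for simplicity itself.
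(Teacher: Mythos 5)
Your proof is correct, but it takes a different and more conceptual route than the paper. The paper argues entirely inside $U(\mathfrak{osp}(1|2))$: it exhibits an explicit element $D_s(g)$ with $\Phi_s\bigl(D_s(g)\bigr)=\partial-g'(x)$ (a polynomial in $x_{\pm\delta}$), uses it to lower the $x$-degree of any nonzero vector down to a multiple of $e^{g}$, and then shows $e^{g}$ generates via $f(x)e^{g}=f(\sqrt2\,x_\delta)\cdot e^{g}$ (resp.\ $f(-\sqrt2\,x_{-\delta})\cdot e^{g}$). You instead observe that $\Phi_{\pm}$ is surjective --- which is right, since the image is a $\C$-subalgebra containing $\tfrac{1}{\sqrt2}x$ and $\tfrac{1}{\sqrt2}\partial$ and hence $x$ and $\partial$ --- so the $U(\mathfrak{osp}(1|2))$- and $\mathcal D(1)$-submodule lattices of $E(g)$ coincide, and simplicity reduces to the classical simplicity of $\mathcal O\simeq E(g)^{\epsilon_{-g}}$ (twisting by an automorphism preserving the lattice). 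The two arguments are closely related: the paper's $D_s(g)$ is precisely an explicit preimage of $\partial-g'(x)$ under $\Phi_s$, whose existence your surjectivity argument guarantees abstractly. What your approach buys is brevity and the observation that the hypothesis $\deg g\ge 1$ is not actually needed for simplicity (it is only needed for the finite-rank statement); what the paper's approach buys is an explicit lowering operator in $U(\mathfrak{osp}(1|2))$ that is reused in spirit elsewhere (e.g.\ in the generation statements and the matrix realization). No gaps.
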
 

\begin{proof} Let $g(x)=\sum_{j=1}^n a_j x^j$ with $a_n\neq 0$. 
Define
\[
D_s(g):=
\begin{cases}
\sqrt{2}\,x_{-\delta}\;-\;\displaystyle\sum_{j=1}^n j a_j\,(\sqrt{2}\,x_{\delta})^{\,j-1}, & s=+,\\[6pt]
\sqrt{2}\,x_{\delta}\;-\;\displaystyle\sum_{j=1}^n j a_j\,(-\sqrt{2}\,x_{-\delta})^{\,j-1}, & s=-.
\end{cases}
\]
In other words, we chose $D_s(g)$ so that 
\[
\Phi_+\bigl(D_+(g)\bigr)=\Phi_-\bigl(D_-(g)\bigr)=\partial-g'(x).
\]
For every $f\in\C[x]$ we have
\begin{equation}\label{eq:derivative}
D_s(g)\cdot\bigl(f(x)e^{g(x)}\bigr)=\bigl(\partial-g'(x)\bigr)\!\bigl(f(x)e^{g(x)}\bigr)=f'(x)\,e^{g(x)}.
\end{equation}
Then we apply a standard reduction-of-degree argument: 
if  $v=\sum_{\ell=0}^m b_\ell x^\ell e^g \in E_s(g)$ ($b_m\neq 0$), with repeated application of 
\eqref{eq:derivative} we obtain
\[
D_s(g)^{\,m}\cdot v=b_m\,m!\,e^{g}\neq 0,
\]
so $e^{g}\in U\!\left(\mathfrak{osp}(1|2)\right)\cdot v$.

On the other hand, $E_{+}(g)$ and $E_{-}(g)$ are generated by $e^{g}$ because: 
\[
f(x)\,e^{g}=f\!\bigl(\sqrt{2}\,x_{\delta}\bigr)\cdot e^{g} \mbox{ in } E_+(g),
\]
and 
\[
f(x)\,e^{g}=f\!\bigl(-\sqrt{2}\,x_{-\delta}\bigr)\cdot e^{g} \mbox{ in } E_-(g).
\]
Thus every nonzero submodule  of $E_{\pm}(g)$ contains $e^g$ and must coincide with $E_{\pm}(g)$. \end{proof}

\subsection{Isomorphism Theorem}
\begin{theorem}\label{thm:isom}
Let $g_1,g_2\in\C[x]$ be nonconstant polynomials, and let $s_1,s_2\in\{+,-\}$.
\begin{enumerate}
\item If $E_{s_1}(g_1)\simeq E_{s_2}(g_2)$, then $\deg g_1=\deg g_2$.
\item We have
\[
E_{s_1}(g_1)\simeq E_{s_1}(g_2)\quad\Longleftrightarrow\quad g_1-g_2\ \text{is constant}.
\]
(With the convention $g_i(0)=0$, this becomes $g_1=g_2$.)
\item   If $\deg g_1=\deg g_2\neq2$, then $E_{+}(g_1)\not\simeq E_{-}(g_2)$. If $a(x)=a_1x+a_2x^2$ and $b(x)=b_1x+b_2x^2$ with $a_2b_2\ne0$, then
\[
E_{+}(a)\;\simeq\;E_{-}(b)
\quad\Longleftrightarrow\quad
b_2=-\frac{1}{4a_2},\qquad b_1=-\frac{a_1}{2a_2}
\]
(i.e., if $e^{b(\xi)}$ is the integral Fourier transform of $e^{a(x)}$).
\end{enumerate}
\end{theorem}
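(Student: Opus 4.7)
The strategy rests on two structural observations: (i) $\Phi_\pm$ are both surjective onto $\mathcal D(1)$, since their images already contain $x$ and $\partial$, so any $\C$-linear map between $\Phi_s$-pullbacks of $\mathcal D(1)$-modules is $U(\mathfrak{osp}(1|2))$-linear iff it is $\mathcal D(1)$-linear; and (ii) $\Phi_-=\theta\Phi_+$, so the $\Phi_-$-pullback of $E(g)$ coincides with the $\Phi_+$-pullback of the $\theta$-twisted module $E(g)^\theta$ (which has $x$ acting as $\partial+g'$ and $\partial$ as $-x$). These reduce the three isomorphism questions to $\mathcal D(1)$-module isomorphism questions: $E(g_1)\simeq E(g_2)$ for the same-sign case of~(2), and $E(g_1)\simeq E(g_2)^\theta$ for the mixed case of~(3). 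Part~(1) is then the rank invariant: Proposition~\ref{U(h)-decomp} identifies the $U(\mathfrak h)$-rank of $E_\pm(g)$ with $\deg g$, which is preserved by isomorphism.

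For part~(2), any $\mathcal D(1)$-linear $\varphi: E(g_1)\to E(g_2)$ is $\C[x]$-linear through the action of $x$, hence $\varphi(f)=f\,p$ with $p:=\varphi(1)\in\C[x]$. The $\partial$-compatibility evaluated at $1$ yields the ODE $p'=(g_1'-g_2')\,p$, whose nonzero polynomial solutions force $g_1-g_2$ to be constant; then $p$ is a nonzero scalar and $\varphi$ is an isomorphism. Under the normalization $g_i(0)=0$ this becomes $g_1=g_2$.

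For part~(3), I analyze $\mathcal D(1)$-maps $\varphi: E(g_1)\to E(g_2)^\theta$. Setting $p:=\varphi(1)$ and iterating the $x$-compatibility $\varphi(x\cdot n)=(\partial+g_2')\varphi(n)$ gives
\[
\varphi(x^k)=(\partial+g_2')^k p = e^{-g_2}\,\partial^k(e^{g_2}p).
\]
Writing $q:=e^{g_2}p$, the $\partial$-compatibility at $n=1$ becomes the single ODE $(g_1'(\partial)+x)q=0$, and the Leibniz identity $\partial^k(xq)=x\,q^{(k)}+k\,q^{(k-1)}$ shows this one equation already forces the $\partial$-compatibility at every $n=x^k$. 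Using $\partial^k(e^{g_2}p)=e^{g_2}(\partial+g_2')^k(p)$, the ODE is equivalent to
\[
g_1'(\partial+g_2')(p)+xp=0,\qquad g_1'(\partial+g_2'):=\sum_{j\ge 0}(g_1')_j(\partial+g_2')^j,
\]
where $(g_1')_j$ denotes the coefficient of $x^j$ in $g_1'$. By part~(1) I may assume $d:=\deg g_1=\deg g_2=:e$.

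The decisive step is a leading-degree argument on this ODE. Setting $m:=\deg p$ and letting $p_m$, $b_e$ denote the leading coefficients of $p$ and $g_2$, an induction on $k$ shows $(\partial+g_2')^k p$ has $x$-degree $m+k(e-1)$ and leading coefficient $(eb_e)^k p_m$; consequently $g_1'(\partial+g_2')(p)$ has leading degree $m+(d-1)(e-1)$ with leading coefficient $d\,a_d(eb_e)^{d-1}p_m\ne 0$, while $xp$ has degree $m+1$. Matching for a nonzero polynomial $p$ forces $(d-1)(e-1)=1$, i.e.\ $d=e=2$; every other case with $d=e$ is excluded, since the two leading contributions cannot cancel. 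When $d=e=2$ the ODE simplifies to
\[
2a_2 p'+(a_1+2a_2 b_1)p+(4a_2 b_2+1)xp=0,
\]
so polynomiality of $p$ forces $4a_2 b_2+1=0$ and $a_1+2a_2 b_1=0$ — the stated integral Fourier-dual relations — with $p$ a nonzero constant. Simplicity of $E_\pm(g)$ (Proposition~\ref{simplicity}) then upgrades any such nonzero $\varphi$ to an isomorphism. The main technical hurdle I foresee is the careful degree bookkeeping in this leading-term step — especially the inductive claim about $(\partial+g_2')^k p$ — after which all remaining steps reduce to elementary ODE computations.
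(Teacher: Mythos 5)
Your proof is correct and follows essentially the same route as the paper: part (2) reduces to the ODE $p'=(g_1'-g_2')p$, and part (3) to a leading-degree comparison in the relation $g_1'(\partial+g_2')(p)=-xp$, which is exactly the paper's comparison of $\bigl(kB^{k-1}+g_1'(B)B^k\bigr)f$ with $-xB^kf$. Your framing via surjectivity of $\Phi_\pm$ and the $\theta$-twist, and your Leibniz-identity check that the single ODE at $1$ forces all compatibilities, are pleasant packaging (the latter also cleanly justifies the converse direction in (3)), but the substance of the argument is the same.
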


\begin{proof}
(1) By Proposition \ref{U(h)-decomp}, both $E_{\pm}(g)$ are $U(\mathfrak h)$-free of rank $\deg g$. An $\mathfrak{osp}(1|2)$–isomorphism preserves this rank, so $\deg g_1=\deg g_2$.

\smallskip
(2) Assume $s_1=+$; the case $s_1=-$  is analogous.
Let $T:E_{+}(g_1)\to E_{+}(g_2)$ be an $\mathfrak{osp}(1|2)$–isomorphism.
Since in both modules $x_\delta=\tfrac1{\sqrt2}x$, we have $T x = x T$, if $f(x):=T(1)$, we have $T(p)=f\,p$ for all $p(x)$.
Intertwining with $x_{-\delta}$ yields to:
\[
T(\partial+g_1')=(\partial+g_2')T.
\]
Acting both sided on $p$ and after simplifying we obtain  the first-order ODE
\[
f'(x)+\bigl(g_2'(x)-g_1'(x)\bigr)f(x)=0.
\]

A nonzero polynomial solution $f$ exists iff $g_1'-g_2'\equiv0$, i.e.\ $g_1-g_2$ is constant. In the latter case $f$ is constant, and $T$ is an isomorphism. This proves (2).

\smallskip
(3) Suppose $T:E_{+}(g_1)\to E_{-}(g_2)$ is an $\mathfrak{osp}(1|2)$–isomorphism.
Set $B:=\partial+g_2'(x)$ and $B_1:=\partial+g_1'(x)$. Take again $f:=T(1)\ne0$ and use the relations
\[
T\,x=B\,T,\qquad T\,B_1=-x\,T.
\]
 By the first relation, $T(p)=p(B)\,f$ for all $p\in\C[x]$.
Applying the second relation to $x^k$ leads to
\[
\bigl(k B^{k-1}+g_1'(B)B^k\bigr)f \;=\; -x\,B^k f.
\]
Let $d=\deg g'_1 = \deg g'_2$. Then $\deg(B^k f)=\deg f+kd$ for $k\ge0$. If $d=0$ ($g_i$ are linear), the degreed on both sides differ by $1$. Hence $d\geq 1$. But the comparing degrees on both sides for large $k$ gives $d^2=1$, i.e.,\ $d=1$. Thus $\deg g_1=\deg g_2=2$. In this case for convenience we write $g_1=a$, $g_2=b$.

Since $g_1'(x)=2a_2 x + a_1$ and $g_2'(x)=2b_2 x + b_1$ with $a_2 b_2\ne0$, comparing the coefficients of the two top-degree terms (first on $x^{\deg f+k+1}$, then on $x^{\deg f+k}$) of
\[
\bigl(k B^{k-1}+g_1'(B)B^k\bigr)f \;=\; -x\,B^k f
\]
 yields
\[
4a_2 b_2=-1,\qquad b_1=2b_2 a_1,
\]
i.e.\ $b_2=-\frac{1}{4a_2}$ and $b_1=-\frac{a_1}{2a_2}$.

Lastly, assume $b_2=-\frac{1}{4a_2}$ and $b_1=-\frac{a_1}{2a_2}$, and let $a(x) = a_1x+a_2x^2$ and  $b(x) = b_1x+b_2x^2$. Then the map $T : E_{+}(g_1) \to E_{-}(g_2)$, $T(f(x)e^{a}) = f(\partial + b'(x))e^{b}$ defines an isomorphism of  $E_{+}(a)$ and $E_{-}(b)$.
\end{proof}

\subsection{The $F(X_+,X_-)$-realization of the exponential modules}
The following theorem provides an explicit realization
of the exponential modules $E_{\pm}(g)$
as modules $F(X_+,X_-)$ (recall Proposition \ref{prop-mat-free} and Definition \ref{def-x}).

\begin{theorem} \label{explicitrealization} If $g(x) = \sum_{i=1}^na_ix^i$ ($n\geq 1, a_n \neq 0$), then $E_{\pm}(g(x)) \simeq F(X_+(h), X_-(h))$, where the matrices $X_+(h), X_-(h)$ are defined as follows.
\begin{itemize}
\item[(i)] For $E_+(g(x))$,  we have  
    $$X_+(h) = \tfrac{1}{\sqrt{2}}\begin{bmatrix}
0      & 0      & \cdots & 0      & \tfrac{1}{na_n}\left(h-\tfrac{1}{2}\right) \\
1      & 0      & \cdots & 0      & -\tfrac{a_1}{na_n}              \\
0      & 1      & \ddots & \vdots & \vdots         \\
\vdots & \vdots & \ddots & 0      & -\tfrac{(n-2)a_{n-2}}{na_n}             \\
0      & 0      & \cdots & 1      & -\tfrac{(n-1)a_{n-1}}{na_n}
\end{bmatrix}, $$

$$X_{-}(h) = \tfrac{1}{\sqrt{2}}\begin{bmatrix}
 a_1 &  h+\tfrac{1}{2}  & 0 & \cdots & 0 \\
 2a_2 & 0 &  h+\tfrac{1}{2}  & \cdots & 0 \\
 \vdots & \vdots & \ddots & \ddots & \vdots \\
 (n-1)a_{n-1} & 0 & \cdots & 0 &  h+\tfrac{1}{2}  \\
 na_n & 0 & \cdots & 0 & 0
\end{bmatrix}.$$
\item[(ii)] For $E_-(g(x))$, we have 
$$X_+(h) = \tfrac{1}{\sqrt{2}}\begin{bmatrix}
 a_1 &  -\left(h-\tfrac{1}{2}\right)  & 0 & \cdots & 0 \\
 2a_2 & 0 &  -\left(h-\tfrac{1}{2}\right)  & \cdots & 0 \\
 \vdots & \vdots & \ddots & \ddots & \vdots \\
 (n-1)a_{n-1} & 0 & \cdots & 0 &  -\left(h-\tfrac{1}{2}\right)  \\
 na_n & 0 & \cdots & 0 & 0
\end{bmatrix}, $$
$$X_{-}(h) = - \tfrac{1}{\sqrt{2}}\begin{bmatrix}
0      & 0      & \cdots & 0      & -\tfrac{1}{na_n}\left(h+\tfrac{1}{2}\right) \\
1      & 0      & \cdots & 0      & -\tfrac{a_1}{na_n}              \\
0      & 1      & \ddots & \vdots & \vdots         \\
\vdots & \vdots & \ddots & 0      & -\tfrac{(n-2)a_{n-2}}{na_n}             \\
0      & 0      & \cdots & 1      & -\tfrac{(n-1)a_{n-1}}{na_n}
\end{bmatrix}.$$
\end{itemize}
\end{theorem}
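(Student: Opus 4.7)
The plan is to compute the matrices of $x_\delta$ and $x_{-\delta}$ in the $U(\mathfrak h)$-basis $\{x^p e^g\}_{p=0}^{n-1}$ supplied by Proposition~\ref{U(h)-decomp} and read off $X_\pm(h)$ directly. Under the identification $x^p e^g \leftrightarrow e_{p+1}$, the module $E_\pm(g)$ becomes $\C[h]^{\oplus n}$ with $h$ acting by multiplication; since $\sigma^{\pm1}$ fixes every constant coordinate vector, the $(p+1)$-st column of $X_\pm(h)$ is exactly $x_{\pm\delta}\cdot(x^p e^g)$ expanded in this basis. By Proposition~\ref{prop-mat-free}, once these columns are correctly identified we obtain $E_\pm(g)\simeq F(X_+,X_-)$; the compatibility relation \eqref{eq-mat-free} is automatic because the matrices arise from a genuine module action, so no post-hoc verification is required.

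For $E_+(g)$, the action $x_\delta=x/\sqrt 2$ simply shifts $x^p e^g\mapsto x^{p+1} e^g/\sqrt 2$, which accounts for the sub-diagonal $1/\sqrt 2$ in the first $n-1$ columns of $X_+(h)$; the last column needs only the reduction
\[
x^n e^g=\frac{h-1/2}{n a_n}\,e^g-\sum_{j=1}^{n-1}\frac{j a_j}{n a_n}\,x^j e^g
\]
already established inside the proof of Proposition~\ref{U(h)-decomp}. For $x_{-\delta}=\partial/\sqrt 2$ the cleanest input is the Weyl relation $\partial x=x\partial+1$: applied to $x^{p-1}e^g$ it yields $\partial(x^p e^g)=(x\partial+1)(x^{p-1}e^g)=\Phi_+(h+1/2)(x^{p-1}e^g)$, so $x_{-\delta}\cdot(x^p e^g)=\tfrac{h+1/2}{\sqrt 2}(x^{p-1}e^g)$ for $p\ge 1$, which is precisely the super-diagonal shape of columns $2,\dots,n$ of $X_-(h)$. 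The first column is read off directly from $x_{-\delta}\cdot e^g=g'(x)e^g/\sqrt 2=\tfrac{1}{\sqrt 2}\sum_{j=1}^n j a_j\,x^{j-1}e^g$.

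The computation for $E_-(g)$ is structurally identical after swapping the roles of $x$ and $\partial$ (with appropriate signs) and noting that $\Phi_-(h)=-(x\partial+1/2)$ reverses the sign of $h$ throughout. The Weyl identity now yields $\partial(x^p e^g)=-(h-1/2)(x^{p-1}e^g)$, producing the super-diagonal $-(h-1/2)/\sqrt 2$ in the new $X_+(h)$, while the reduction formula becomes $n a_n\, x^n e^g=-(h+1/2)\,e^g-\sum_{j=1}^{n-1} j a_j\,x^j e^g$ and, combined with the overall sign from $x_{-\delta}=-x/\sqrt 2$, gives the final column of $X_-(h)$. The only genuine obstacle is sign bookkeeping: correctly handling $\Phi_-(h)=-\Phi_+(h)$ and the half-shifts $\pm 1/2$ in both the reduction formula and the translation between $x\partial+1$ and an $h$-polynomial. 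Once that short dictionary is set up, each matrix entry drops out of a one-line computation.
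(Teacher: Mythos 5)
Your proposal is correct and follows essentially the same route as the paper: both compute the columns of $X_{\pm}(h)$ by acting with $x_{\delta}$ and $x_{-\delta}$ on the $U(\mathfrak h)$-basis $\{x^{p}e^{g}\}_{p=0}^{n-1}$ from Proposition~\ref{U(h)-decomp}, using the reduction formula \eqref{eq:red+} for the top column and the identity $\partial(x^{p}e^{g})=(x\partial+1)(x^{p-1}e^{g})$ rewritten in terms of $h$ for the off-diagonal entries. Your sign bookkeeping for the $E_{-}(g)$ case (which the paper leaves as ``analogous'') checks out against the stated matrices.
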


In particular, if $s \in \{ +,-\}$, for $E_s(g)$,   $s\sqrt{2}\,X_{s}(h)$ is the companion matrix of the monic polynomial
\[
p_{s}(\lambda;h)\;=\;\lambda^{n}\;+\;\sum_{j=1}^{n-1}\frac{j\,a_{j}}{n\,a_{n}}\lambda^{j}
\;-\;\frac{sh-\tfrac12}{n\,a_{n}},
\]
i.e. $\det(\lambda \I-s\sqrt{2}\,X_{s}(h))=p_{s}(\lambda;h)$.

\begin{proof}
Recall from Proposition~\ref{U(h)-decomp}, 
$$E_{\pm}(g(x))\;=\;
\bigoplus_{l=0}^{n-1}
U(\mathfrak{h})\cdot\bigl(x^{l}e^{g(x)}\bigr).$$

Subsequently, the maps
$$
\Psi_{\pm}:E_{\pm}(g(x))\to \C[h]^{\oplus n}, \qquad  \sum_{l=0}^{n-1} f_l(h)\cdot\,\bigl(x^{l}e^{g(x)}\bigr) \mapsto \bigl[f_0(h)\;\;\cdots\;\;f_{n-1}(h)\bigr]^{\mathsf T},
$$
extend to isomorphisms of $U(\mathfrak{osp}(1|2))$-modules. We continue with the case of $E_{+}(g)$; the case of $E_-(g)$ is analogous. 

For \(0\le i\le n-2\),
\[
x_{\delta}\cdot(x^i e^{g})=\tfrac{1}{\sqrt2}\,x^{i+1}e^{g}.
\]

For \(i=n-1\), we use \eqref{eq:red+}:
$$x_{\delta}\cdot \left(x^{n-1}e^{g(x)}\right) = \frac{1}{\sqrt 2}x^{n}e^{g(x)} = \frac{1}{na_n\sqrt 2}\left(\left(h-\frac{1}{2}\right)\cdot e^{g(x)} - \sum_{j=1}^{n-1}ja_jx^je^{g(x)}\right).$$

For the action of $x_{-\delta}$, we have
\[
x_{-\delta}\cdot e^{g}=\tfrac{1}{\sqrt2}\,g'(x)\,e^{g}=\tfrac{1}{\sqrt2}\sum_{j=1}^n j a_j x^{j-1}e^{g},
\]
and, for \(1\le i\le n-1\),
\[
x_{-\delta}\cdot(x^i e^{g})
=\tfrac{1}{\sqrt2}\,\partial(x^i e^{g})
=\tfrac{1}{\sqrt2}\,(h+\tfrac{1}{2})\cdot \,(x^{i-1}e^{g}),
\]
since \(h=x\partial+\frac12\) implies \(\partial(x^i e^{g})=(h+\frac12)\,(x^{i-1}e^{g})\).
Therefore, on the $U(\mathfrak h)$-generating ordered set $\left\{x^{i}e^{g(x)}\right\}_{i=0}^{n-1}$, the matrices $X_+(h)$ and $X_-(h)$ are exactly the ones listed in part (i).
\end{proof}

\subsection{The rank-one case}
In this subsection we discuss $\mathfrak{osp}(1|2)$-modules that are $U(\mathfrak h)$-free of rank one. The rank-one $U(\mathfrak h)$-free modules of $\mathfrak{osp}(1|2m)$ are classified in \cite{CZ}. The result in \cite{CZ} for $m=1$ states that every $\mathfrak{osp}(1|2)$-module $M$ in $\mathcal{FR}(1)$ is isomorphic to $F(X_+,X_-)$, where $(X_+,X_-) = \left(\frac{1}{a\sqrt{2}}\left(h-\frac{1}{2}\right),\frac{a}{\sqrt{2}} \right)$ or $(X_+,X_-) = \left(\frac{a}{\sqrt{2}}, \frac{1}{a\sqrt{2}}\left(h+\frac{1}{2}\right)\right)$ for some $a \in \C^*$. But, by Theorem \ref{simplicity}, these are precisely the modules $E_{+}(ax)$ and $E_{-}(ax)$. Hence we have the following

\begin{corollary} Every module in $\mathcal{FR}(1)$ is isomorphic to either $E_{+}(ax)$ or $E_-(ax)$ for a unique 
$a\in \C^*$. 
\end{corollary}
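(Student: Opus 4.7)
The plan is to combine the classification of rank-one $U(\mathfrak h)$-free $\mathfrak{osp}(1|2)$-modules quoted from \cite{CZ} with the explicit $F(X_+,X_-)$-realization of $E_\pm(g)$ given by Theorem~\ref{explicitrealization}, and then invoke the isomorphism theorem to obtain uniqueness of $a$.

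First, I would apply the result of \cite{CZ} recalled in the paragraph preceding the corollary: for any $M\in\mathcal{FR}(1)$, there exist $a\in\C^*$ and a sign $s\in\{+,-\}$ such that $M\simeq F(X_+,X_-)$ with the pair $(X_+,X_-)$ equal to
\[
\left(\tfrac{1}{a\sqrt{2}}\bigl(h-\tfrac12\bigr),\ \tfrac{a}{\sqrt{2}}\right)\quad\text{or}\quad
\left(\tfrac{a}{\sqrt{2}},\ \tfrac{1}{a\sqrt{2}}\bigl(h+\tfrac12\bigr)\right).
\]
Second, I would specialize Theorem~\ref{explicitrealization} to $n=1$ and $g(x)=ax$ (so $a_1=a_n=a$): in case (i) the $1\times 1$ matrices read $X_+(h)=\tfrac{1}{a\sqrt2}(h-\tfrac12)$ and $X_-(h)=\tfrac{a}{\sqrt2}$, which agrees exactly with the first pair above; in case (ii) one gets $X_+(h)=\tfrac{a}{\sqrt2}$ and $X_-(h)=\tfrac{1}{a\sqrt2}(h+\tfrac12)$, matching the second pair. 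Therefore $M\simeq E_+(ax)$ in the first case and $M\simeq E_-(ax)$ in the second, proving existence.

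For the uniqueness of both the sign $s$ and the scalar $a$, I would appeal to Theorem~\ref{thm:isom}. Part~(2), with the normalization $g_i(0)=0$, gives that $E_{+}(ax)\simeq E_{+}(bx)$ (resp.\ $E_{-}(ax)\simeq E_{-}(bx)$) forces $ax=bx$, hence $a=b$. Part~(3) states that an isomorphism $E_{+}(g_1)\simeq E_{-}(g_2)$ can occur only when $\deg g_1=\deg g_2=2$; since $\deg(ax)=1$, no module of the form $E_{+}(ax)$ is isomorphic to any $E_{-}(bx)$. Combining these two observations, the pair $(s,a)$ associated to $M$ is uniquely determined, completing the proof.

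There is no genuine obstacle here: the argument is essentially an identification of matrix data. The only point requiring care is the bookkeeping of the scalar $\tfrac{1}{\sqrt2}$ and the signs in the $n=1$ specialization of Theorem~\ref{explicitrealization}(ii), where the outer $-\tfrac{1}{\sqrt2}$ cancels against the $-\tfrac{1}{na_n}(h+\tfrac12)$ entry to reproduce $\tfrac{1}{a\sqrt2}(h+\tfrac12)$; once this is checked, the corollary follows immediately.
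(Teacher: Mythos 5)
Your proposal is correct and follows essentially the same route as the paper: the paragraph preceding the corollary invokes the classification from \cite{CZ} and matches the resulting pairs $(X_+,X_-)$ with the $n=1$ specialization of the explicit realization (the paper's citation of Theorem~\ref{simplicity} there is evidently a slip for Theorem~\ref{explicitrealization}, which you use correctly). Your explicit treatment of uniqueness of the pair $(s,a)$ via Theorem~\ref{thm:isom}(2)--(3) is a welcome addition that the paper leaves implicit.
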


In the rank-one case the isomorphisms 
$$
\Psi_{\pm}: E_{\pm}(ax) \to \mathbb C[h],\quad f(h)\cdot e^{ax} \mapsto f(h)
$$
are easy to describe:
\begin{proposition} \label{prop-psi-rank1}
For  $k\in \Z_{\geq 0}$, we have
$$
\Psi_+: x^ke^{ax}\mapsto \frac{k!}{a^k}\binom{h-1/2}{k},\qquad \Psi_-:  x^ke^{ax}\mapsto \frac{k!}{a^k}\binom{-h-1/2}{k}.
$$
Equivalently, we look at the coefficients of $t^k$ at
$$
\Psi_+: e^{tx+ax} \mapsto \left( 1+\frac{t}{a}\right)^{h-\frac{1}{2}}, \; \; \Psi_-:  e^{tx+ax} \mapsto \left( 1+\frac{t}{a}\right)^{-h-\frac{1}{2}}.
$$
\end{proposition}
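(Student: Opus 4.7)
The plan is to iterate the action of multiplication by $x$ on $e^{ax}$ via the one-step commutation relations in $U(\mathfrak{osp}(1|2))$ and then recognize the resulting falling products as binomial coefficients. The key input is that $\Psi_\pm$, as defined in \S\ref{explicitrealization}, intertwines the $U(\mathfrak{osp}(1|2))$–actions, so the rank-one matrices $X_\pm(h)$ from Theorem \ref{explicitrealization} determine the action of $x_{\pm\delta}$ on $\C[h]$.

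First, I would establish the one-step recursions. For $\Psi_+$: Theorem \ref{explicitrealization}(i) gives $X_+(h)=\frac{h-1/2}{a\sqrt 2}$ in rank one, so $x_\delta$ acts on $\C[h]$ as $f(h)\mapsto \frac{h-1/2}{a\sqrt 2}\sigma(f(h))$. Since in $E_+(ax)$ one has $x=\sqrt 2\,x_\delta$, it follows that for any $m\in E_+(ax)$,
\[
\Psi_+(x\cdot m)\;=\;\tfrac{h-1/2}{a}\,\sigma\!\bigl(\Psi_+(m)\bigr).
\]
Analogously, Theorem \ref{explicitrealization}(ii) gives $X_-(h)=\frac{h+1/2}{a\sqrt 2}$ and in $E_-(ax)$ we have $x=-\sqrt 2\,x_{-\delta}$, hence
\[
\Psi_-(x\cdot m)\;=\;-\tfrac{h+1/2}{a}\,\sigma^{-1}\!\bigl(\Psi_-(m)\bigr).
\]

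Second, I would iterate these starting from $\Psi_\pm(e^{ax})=1$. An easy induction on $k$ yields
\[
\Psi_+(x^k e^{ax})\;=\;\frac{1}{a^k}\prod_{j=0}^{k-1}\!\bigl(h-\tfrac12-j\bigr),\qquad
\Psi_-(x^k e^{ax})\;=\;\frac{(-1)^k}{a^k}\prod_{j=0}^{k-1}\!\bigl(h+\tfrac12+j\bigr).
\]
These coincide with the claimed formulas once one rewrites the first product as $k!\binom{h-1/2}{k}$ and the second as $k!\binom{-h-1/2}{k}$, directly from the definition of generalized binomial coefficients given in \S2.

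Third, for the generating-function reformulation, I would expand $e^{tx+ax}=\sum_{k\ge 0}\frac{t^k}{k!}x^k e^{ax}$, apply $\Psi_\pm$ term by term, and sum:
\[
\Psi_+(e^{tx+ax})\;=\;\sum_{k\ge 0}\binom{h-1/2}{k}(t/a)^k\;=\;(1+t/a)^{h-1/2},
\]
and similarly for $\Psi_-$, where the rightmost identity is the generalized binomial series applied with exponent $\alpha=\pm(h\mp1/2)\in\C[h]$, viewed formally in $\C[h][[t]]$. There is no serious obstacle; the only care needed is to track the factor $-\sqrt 2$ coming from $x=-\sqrt 2\,x_{-\delta}$ in $E_-(ax)$, which produces exactly the alternating signs that get absorbed into $\binom{-h-1/2}{k}$.
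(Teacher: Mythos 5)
Your argument is correct and is essentially the paper's own proof: both reduce the claim to a first-order recursion in $k$ starting from $\Psi_{\pm}(e^{ax})=1$ and then recognize the resulting falling/rising factorial products as $k!\binom{\pm h-1/2}{k}$. The only (cosmetic) difference is where the one-step recursion comes from: you read it off the rank-one companion matrices of Theorem~\ref{explicitrealization} together with the $\sigma$-twisted action, obtaining $P_{k+1}(h)=\tfrac{h-1/2}{a}\,\sigma\bigl(P_k(h)\bigr)$, whereas the paper computes $h\cdot(x^k e^{ax})=(k+\tfrac12)x^ke^{ax}+a\,x^{k+1}e^{ax}$ directly from $\Phi_+(h)=x\partial+\tfrac12$ and solves for $x^{k+1}e^{ax}$, giving $P_{k+1}(h)=\tfrac{h-k-1/2}{a}P_k(h)$; the two recursions are formally different but have the same solution, and your sign bookkeeping for $x=-\sqrt2\,x_{-\delta}$ in $E_-(ax)$ is right. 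The generating-function step is handled the same way in both (termwise application of $\Psi_{\pm}$ and the generalized binomial series in $\C[h][[t]]$).
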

\begin{proof}
By  Proposition \ref{U(h)-decomp},
there exist unique polynomials $P_k^{(\pm)}(h)\in\mathbb C[h]$ such that
\[
x^k e^{ax}=P_k^{(\pm)}(h)\cdot \,e^{ax},\qquad k\ge 0,
\]
and $\Psi_{\pm}(x^k e^{ax})=P_k^{(\pm)}(h)$.

Hence, for $k\ge 0$,
\begin{align*}
h\cdot(x^k e^{ax})
&=(x\partial+\tfrac12)\bigl(x^k e^{ax}\bigr)
 =\Bigl(k+\tfrac12\Bigr)x^k e^{ax}+a\,x^{k+1}e^{ax}, 
\end{align*}
which leads to
\[
x^{k+1}e^{ax}=\frac{h-\bigl(k+\tfrac12\bigr)}{a}\,x^{k}e^{ax}.
\]
This, together with the condition $P_0^{(+)}(h)=1$ leads to the desired formula for $\Psi_+$. The case of $\Psi_-$ is analogous.
\end{proof}

\begin{remark} \label{rem-ode}
One other way to define the isomorphisms $\Psi_i$ is by using the identity 
$$
\left(1+z\right)^{D}f(x) = f((1+z)x)
$$
for the Euler operator $D = x\partial$ and analytic functions $f(x)$. In fact, for $z=\frac{t}{a}$ and $f(x) = e^{ax}$, we simply have
$$
\Psi_{\pm}^{-1}: \left(1+\frac{t}{a}\right)^{D} \mapsto \left(1+\frac{t}{a}\right)^{D}e^{ax}, 
$$
where $D=\pm h-1/2$ on the left-hand side. Conversely, one may look for analytic functions $F(t;D)$ that satisfy
$$
e^{(t+a)x} = F(t;D)e^{ax}.
$$
Then after  differentiating by $t$ and using that 
\[
D\,e^{(a+t)x}=x\partial\bigl(e^{(a+t)x}\bigr)=(a+t)\,x\,e^{(a+t)x},
\] we obtain the differential equation
$$
 F_t'(t;D)=\frac{D}{a+t}\,F(t;D); \,  F(0;D)=1.
$$
The solution of this equation is precisely
$F(t,D) = \left(1+\frac{t}{a} \right)^D$. This approach will be discussed in the next subsection for arbitrary rank.
\end{remark}

\subsection{The isomorphisms $\Psi_{\pm}$ in terms of solutions of ODE} \label{subsec:ODE}
Inspired by Remark \ref{rem-ode},  we present the explicit form of the isomorphisms $\Psi_{\pm}$ in terms of solutions of ODE. This is achieved in the following proposition. 

\begin{lemma}
Fix $g(x)=\sum_{j=1}^{n}a_j x^{j}$ with $a_n\neq0$, and let $s\in\{+,-\}$.
Define the $\C[h]^{\oplus n}$–valued generating function
\[
F_s(t;h)\ :=\ \Psi_s\!\bigl(e^{tx}e^{g(x)}\bigr).
\]
Then each coordinate of $F_s(t;h)$ satisfies the scalar $n$th-order linear ODE
\begin{equation}\label{eq:master-ODE-short}
n a_n\,\partial_t^{\,n}F_s(t;h)
\;+\;\sum_{j=1}^{n-1} j a_j\,\partial_t^{\,j}F_s(t;h)
\;+\;t\,\partial_t F_s(t;h)
\;+\;\Bigl(\tfrac12 -s h\Bigr)F_s(t;h)\;=\;0.
\end{equation}

Moreover, let $F_{s,0},\dots,F_{s,n-1}$ be the unique $n$ solutions of
\eqref{eq:master-ODE-short} with  initial conditions
\begin{equation*}
\partial_t^{\,k}F_{s,j}(0;h)\;=\;\delta_{jk}\qquad (0\le j,k\le n-1).
\end{equation*}
Then
\[
\Psi_s\!\bigl(e^{tx}e^{g(x)}\bigr)
\;=\;\sum_{j=0}^{n-1}F_{s,j}(t;h)\,e_j,
\]
where $e_j$ is the $j$th standard basis vector of $\C[h]^{\oplus n}$. Equivalently,
\[
\Psi_s\!\bigl(x^{k}e^{g}\bigr)\;=\;\partial_t^{\,k}\Big|_{t=0}\Psi_s\!\bigl(e^{tx}e^{g}\bigr)
\;=\;e_k\qquad(k=0,1,\dots,n-1)
\]
with respect to the  $U(\mathfrak h)$–basis used in Theorem~\ref{explicitrealization}.
\end{lemma}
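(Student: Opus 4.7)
The plan is to obtain the ODE as the image under $\Psi_s$ of the relation that expresses the action of $h$ on the generating element $e^{tx}e^{g(x)}$, and then to read off the initial conditions from the basis choice of Theorem~\ref{explicitrealization}.

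First I view $e^{tx}e^{g(x)}$ as a formal element of $E_s(g)[[t]]$ and expand $e^{tx}=\sum_{k\ge 0}t^{k}x^{k}/k!$. Pulling the $\C$-linear map $\Psi_s$ past the formal sum and identifying the $t$-derivative gives
$$
\Psi_s\bigl(x^{j}e^{tx}e^{g}\bigr)\;=\;\partial_t^{\,j}F_s(t;h),\qquad j\ge 0.
$$
Next I use the explicit formulas from Definition~\ref{generalexp-module}, which amount to saying that $h$ acts on $E_s(g)$ as $s(x\partial+\tfrac12)$ with $s=\pm1$. A direct computation gives
$$
h\cdot\bigl(e^{tx}e^{g}\bigr)\;=\;s\Bigl(tx\;+\;\sum_{j=1}^{n}j a_j\,x^{j}\;+\;\tfrac12\Bigr)\bigl(e^{tx}e^{g}\bigr).
$$
Applying $\Psi_s$, using its $\C[h]$-linearity on the left-hand side and the identity above on the right, then multiplying through by $s$ and isolating the leading $\partial_t^{\,n}$ term, yields exactly the master ODE~\eqref{eq:master-ODE-short}.

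For the initial conditions I observe that $\partial_t^{k}\!\bigl(e^{tx}e^{g}\bigr)|_{t=0}=x^{k}e^{g}$, and by the basis convention of Theorem~\ref{explicitrealization} we have $\Psi_s(x^{k}e^{g})=e_k$ for $0\le k\le n-1$. Hence $\partial_t^{k}F_s(0;h)=e_k$ for those values of $k$. Since $na_n\ne 0$, the ODE is a regular $n$th-order linear ODE in $t$ (with coefficients in $\C[h,t]$), so the recurrence it induces on Taylor coefficients determines each coordinate of $F_s$ uniquely from its first $n$ derivatives at $0$. Superposition then gives $F_s(t;h)=\sum_{j=0}^{n-1}F_{s,j}(t;h)\,e_j$, where $F_{s,j}$ is the unique scalar solution with $\partial_t^{k}F_{s,j}(0;h)=\delta_{jk}$; and evaluating $\partial_t^{k}F_s(0;h)$ reproduces $\Psi_s(x^{k}e^{g})=e_k$ for $k=0,\dots,n-1$.

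The only real technical point is making the generating-function setting precise: one must check that formal differentiation in $t$ and the $\C$-linear map $\Psi_s$ commute on $E_s(g)[[t]]$, so that the displayed ``$\partial_t^{j}F_s=\Psi_s(x^{j}e^{tx}e^{g})$'' identity is legitimate. This is routine but must be spelled out, since without it the passage from the module identity to the ODE would be on shaky formal ground. All other steps are bookkeeping with the $s=\pm 1$ sign, followed by a standard uniqueness argument for linear ODEs with regular leading coefficient.
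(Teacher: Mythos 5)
Your proposal is correct and follows essentially the same route as the paper: both translate multiplication by $x^{j}$ on $e^{tx}e^{g}$ into $\partial_t^{\,j}$ acting on $F_s$, use that $h$ acts as $s\bigl(x\partial+\tfrac12\bigr)$ to produce the master ODE, and read off the initial conditions from $\partial_t^{\,k}e^{tx}\big|_{t=0}=x^{k}$ together with $\Psi_s(x^{k}e^{g})=e_k$. Your explicit uniqueness/superposition argument (valid since the leading coefficient $na_n$ is a nonzero constant) and the remark that $\Psi_s$ commutes with formal $t$-differentiation coefficient-wise are routine points the paper leaves implicit.
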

\begin{proof}
We use the following identities:
\begin{equation}\label{eq-iden}
\partial_t^{\,m}\!\bigl(e^{t x}e^{g}\bigr)=x^{m}e^{t x}e^{g},\qquad
(x\partial_x)\!\bigl(e^{t x}e^{g}\bigr)
=\Bigl(t\,\partial_t+\sum_{j=1}^{n}j a_j\,\partial_t^{\,j}\Bigr)\!\bigl(e^{t x}e^{g}\bigr).
\end{equation}
Recall the identity used in Proposition~\ref{U(h)-decomp}:
\[
n a_n x^{n}e^{g}+\sum_{j=1}^{n-1} j a_j x^{j}e^{g}=(x\partial_x)(e^{g}).
\]
Multiplying by $e^{t x}$ and using \eqref{eq-iden} gives
\[
\Bigl(n a_n\,\partial_t^{\,n}+\sum_{j=1}^{n-1} j a_j\,\partial_t^{\,j}+t\,\partial_t-x\partial_x\Bigr)\bigl(e^{t x}e^{g}\bigr)=0.
\]
We have
\[
x\partial_x=
\begin{cases}
\ h-\tfrac12& \text{on }E_{+}(g),\\
-\,h-\tfrac12& \text{on }E_{-}(g),
\end{cases}
\]
so applying the $\C[h]$–linear map $\Psi_{\pm}$ yields \eqref{eq:master-ODE-short}. Finally,
\(
\partial_t^{\,k}F_{\pm}(0;h)=\Psi_{\pm}\!\bigl(x^{k}e^{g}\bigr)
\)
since $\partial_t^{\,k}e^{t x}\big|_{t=0}=x^{k}$.
\end{proof}

\begin{example}[The rank-two case]
Take $n=2$ and $s=+$, and write $F,F_0,F_1$ for $F_+,F_{0,+},F_{1,+}$.
Then $F_r$ ($r=0,1$) satisfies
\[
2a_2\,F_r''(t;h)+(a_1+t)\,F_r'(t;h)+\Bigl(\tfrac12-h\Bigr)F_r(t;h)=0,
\qquad
\begin{cases}
F_0(0;h)=1,\ F_0'(0;h)=0,\\
F_1(0;h)=0,\ F_1'(0;h)=1.
\end{cases}
\]
With $z(t):=\dfrac{t+a_1}{\sqrt{2a_2}}$ and $S(t):=\dfrac{a_1 t+\tfrac12 t^2}{4a_2}$, the gauge change
$F_r(t;h)=e^{-S(t)}G_r(z(t))$ reduces the equation to the Weber (parabolic cylinder) form
\[
G_r''+\Bigl(\nu+\tfrac12-\tfrac{z^2}{4}\Bigr)G_r=0,\qquad \nu:=-h-\tfrac12,
\]
so
\[
F_r(t;h)=e^{-S(t)}\Bigl(\alpha_r\,D_{\nu}(z(t))+\beta_r\,D_{\nu}(-z(t))\Bigr),
\]
with $\alpha_r,\beta_r\in\C$ uniquely determined by the initial conditions. See, e.g.,  \cite[\S12]{DLMF}.
\end{example}

\subsection{The combinatorics of the isomorphisms $\Psi_{\pm}$} \label{subsec:combinatorics}
In this subsection, we give an explicit description of the isomorphism $\Psi_{\pm}$ in terms of recursive sequences. As usual, we fix $g(x)=\sum_{j=1}^{n}a_j x^{j}$ with $a_n\neq0$ and $n\geq 1$.

\begin{definition} Let \(s\in\{+,-\}\), $k\in \Z_{\geq 0}$, and $0\leq p\leq n-1$. Define $w^{(s)}_{k,p}(h)$ as follows. 
$$w^{(s)}_{q,p}(h) = \delta_{q,p}, \quad\text{for}\quad 0\leq q \leq n-1,$$
$$w^{(s)}_{l,p}(h) \;=\; \frac{sh-l+n-\frac{1}{2}}{n\,a_n}\,w^{(s)}_{l-n,p}(h)
\;-\;
\sum_{j=1}^{n-1}\frac{j\,a_j}{n\,a_n}\,
w^{(s)}_{\,l-n+j,p},\quad \text{for all} \quad l\geq n.$$
\end{definition}

\begin{lemma}
   We have the following formulas for   $\Psi_{\pm}: E_{\pm}(g) \to \mathbb C[h]^{\oplus n}$:
   
\begin{equation*}
    \Psi_{s}(
    x^ke^{g(x)})= [w^{(s)}_{k,0}(h)\;\;\cdots\;\; w^{(s)}_{k,n-1}(h)]^{\mathsf T},\quad \text{for all} \quad k\in \Z_{\geq 0}.
\end{equation*}
\end{lemma}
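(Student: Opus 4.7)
The plan is to prove the formula by strong induction on $k$, using the $\mathbb C[h]$-linearity of $\Psi_s$ together with a uniform reduction identity whose coefficients are exactly the ones appearing in the recursive definition of $w^{(s)}_{l,p}(h)$.

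First I would dispose of the base case $0\le k\le n-1$. By Proposition \ref{U(h)-decomp}, the set $\{x^p e^{g}\}_{p=0}^{n-1}$ is a $U(\mathfrak h)$-basis of $E_s(g)$, so the tautological decomposition $x^k e^{g}=\sum_{p=0}^{n-1}\delta_{k,p}\cdot(x^p e^{g})$ gives $\Psi_s(x^k e^{g})=e_k$. This matches the initial conditions $w^{(s)}_{k,p}(h)=\delta_{k,p}$ for $0\le k,p\le n-1$.

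Next I would derive a \emph{single} reduction formula valid for both signs: for every $\ell\ge n$ and every $s\in\{+,-\}$,
\begin{equation}\label{eq:plan-red}
x^{\ell}e^{g}\ =\ \frac{sh-\ell+n-\tfrac12}{n a_n}\,x^{\ell-n}e^{g}\ -\ \sum_{j=1}^{n-1}\frac{j a_j}{n a_n}\,x^{\ell-n+j}e^{g}.
\end{equation}
For $s=+$ this is precisely \eqref{eq:red+}. For $s=-$, observe that inside $E_-(g)$ the generator $h$ acts as $-(x\partial+\tfrac12)$; computing $h\cdot(x^{\ell-n}e^{g})$ in $E_-(g)$ via this action and solving for $x^{\ell}e^{g}$ yields \eqref{eq:plan-red} with the sign $-h$ in the numerator, which is exactly the $s=-$ case. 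Thus \eqref{eq:plan-red} is a uniform identity in $E_s(g)$, and the key point is that its coefficients are literally those used to define $w^{(s)}_{l,p}(h)$ for $l\ge n$.

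Now I would perform strong induction on $\ell$ starting from $\ell=n$. Since $\Psi_s$ is an isomorphism of $U(\mathfrak h)$-modules and hence $\mathbb C[h]$-linear, applying $\Psi_s$ to \eqref{eq:plan-red} gives
\[
\Psi_s(x^{\ell}e^{g})\ =\ \frac{sh-\ell+n-\tfrac12}{n a_n}\,\Psi_s(x^{\ell-n}e^{g})\ -\ \sum_{j=1}^{n-1}\frac{j a_j}{n a_n}\,\Psi_s(x^{\ell-n+j}e^{g}).
\]
By the inductive hypothesis, the $p$-th component of $\Psi_s(x^{\ell-n+j}e^{g})$ equals $w^{(s)}_{\ell-n+j,p}(h)$ for $0\le j\le n-1$ (for $j=0$ this is $w^{(s)}_{\ell-n,p}(h)$, and the terms with smaller index $\ell-n+j<\ell$ are covered by induction). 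Reading off the $p$-th coordinate and comparing with the recursion defining $w^{(s)}_{l,p}(h)$ yields
\[
\bigl[\Psi_s(x^{\ell}e^{g})\bigr]_p\ =\ w^{(s)}_{\ell,p}(h),
\]
completing the inductive step.

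The proof is essentially formal once \eqref{eq:plan-red} is correctly established in both cases; the only delicate point is the sign bookkeeping in the $s=-$ case, where one must use that $h$ acts as $-(x\partial+\tfrac12)$ on $E_-(g)$ and not forget that the sum $\sum_{j=1}^n j a_j x^{k+j}e^g$ from $\partial(x^k e^g)$ still has its top term $n a_n x^{k+n}e^g$ with a plus sign even after multiplying by $-1$, because the sign is absorbed into the $sh$ coefficient on the right-hand side of \eqref{eq:plan-red}. Once this is verified, the induction goes through without further difficulty.
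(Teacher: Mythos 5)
Your proof is correct and follows essentially the same route as the paper: both arguments rest on the $U(\mathfrak h)$-basis $\{x^pe^g\}_{p=0}^{n-1}$ from Proposition~\ref{U(h)-decomp}, the sign-uniform reduction identity obtained from $h=\pm(x\partial+\tfrac12)$ on $E_{\pm}(g)$, and the observation that the resulting coefficients satisfy exactly the recurrence defining $w^{(s)}_{l,p}(h)$ with the same initial data. Your strong induction is just an explicit rendering of the paper's appeal to uniqueness of the solution of that recurrence, and your sign check in the $s=-$ case is accurate.
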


\begin{proof} We provide a summary of the important steps, and for brevity omit the technical part. 
By Proposition~\ref{U(h)-decomp},
$E_s(g)=\bigoplus_{p=0}^{n-1}U(\mathfrak h)\cdot (x^pe^g)$, hence for each $k\ge0$ there are unique $C^{(s)}_{k,p}(h)\in\C[h]$ with
\[
x^ke^g=\sum_{p=0}^{n-1} C^{(s)}_{k,p}(h)\,x^pe^g,
\qquad
C^{(s)}_{q,p}(h)=\delta_{q,p}\ (0\le q\le n-1).
\]

On the other hand,  using that $h=x\partial+\tfrac12$ on $E_+(g)$ and $h=-(x\partial+\tfrac12)$ on $E_-(g)$,
one obtains that for every $\ell\ge n$,
\[
x^\ell e^g
=\frac{(\pm h)-\ell+n-\frac12}{n a_n}\,x^{\ell-n}e^g
-\sum_{j=1}^{n-1}\frac{j a_j}{n a_n}\,x^{\ell-n+j}e^g,
\]
with the upper sign for $s=+$ and the lower sign for $s=-$. 

Combine the last two sets of identities and compare the
coefficients of the $U(\mathfrak h)$–basis $\{x^pe^g\}_{p=0}^{n-1}$. This shows that
the polynomials $C^{(s)}_{\ell,p}(h)$ (for $\ell\ge n$) satisfy exactly the
recurrence that defines $w^{(s)}_{\ell,p}(h)$, with the same initial data
$C^{(s)}_{q,p}=\delta_{q,p}$, $0\le q\le n-1$. By uniqueness,
$C^{(s)}_{k,p}(h)=w^{(s)}_{k,p}(h)$ for all $k\ge0$, $0\le p\le n-1$.\end{proof}
\begin{definition}
For $n\in\mathbb Z_{>0}$ and $M\in\mathbb Z_{\ge0}$, let $\Comp_n(M)$ denote the set
of ordered compositions of $M$ with parts in $\{1,\dots,n\}$.
We adopt the convention $\Comp_n(M)=\emptyset$ for $M\leq 0$.
\end{definition}

With the notations as above we have the following. (Empty products  are $1$, and empty sums are $0$.)
\begin{proposition}
For \(q\in\{1,\dots,n\}\) and \(r\in\mathbb Z\) set
\[
\phi_s(q;r):=
\begin{cases}
-\dfrac{(n-q)\,a_{\,n-q}}{n\,a_n}, & 1\le q\le n-1,\\
\dfrac{s\,h - r + n - \tfrac12}{n\,a_n}, & q=n.
\end{cases}
\]
For \(p\in\{0,1,\dots,n-1\}\) and \(m\in\mathbb Z_{\ge0}\), the polynomials
\(w^{(s)}_{m,p}(h)\in\mathbb C[h]\) defined by
\[
\Psi_s\!\bigl(x^m e^{g(x)}\bigr)
=\bigl[w^{(s)}_{m,0}(h),\dots,w^{(s)}_{m,n-1}(h)\bigr]^{\mathsf T}
\]
are given by
\[
w^{(s)}_{m,p}(h)=
\begin{cases}
\delta_{m,p}, & 0\le m\le n-1,\\[6pt]
\displaystyle\sum_{(\varepsilon_1,\dots,\varepsilon_r)\in\Comp_n(m-p)}\;
\prod_{t=1}^{r}
\phi_s\!\Bigl(\varepsilon_t\;;\;m-\sum_{u=1}^{t-1}\varepsilon_u\Bigr),
& \text{if }m\ge n,\ p\ge1,\\[16pt]
\displaystyle\sum_{\substack{(\varepsilon_1,\dots,\varepsilon_r)\in\Comp_n(m)\\ \varepsilon_r=n}}
\ \prod_{t=1}^{r}
\phi_s\!\Bigl(\varepsilon_t\;;\;m-\sum_{u=1}^{t-1}\varepsilon_u\Bigr),
& \text{if }m\ge n,\ p=0.
\end{cases}
\]
\end{proposition}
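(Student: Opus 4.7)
My plan is a strong induction on $m$. The base case $0 \le m \le n-1$ is immediate from the initial condition $w^{(s)}_{q,p} = \delta_{q,p}$. For $m \ge n$, assume the closed formulas for all indices smaller than $m$.

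The first step is to rewrite the defining recursion in the compact form
\[
w^{(s)}_{m,p}(h) \;=\; \sum_{q=1}^{n} \phi_s(q;m)\,w^{(s)}_{m-q,p}(h).
\]
This follows by the change of index $q = n - j$ in the sum $\sum_{j=1}^{n-1}\tfrac{j a_j}{n a_n}\,w^{(s)}_{l-n+j,p}$: the coefficient of $w^{(s)}_{m-q,p}$ equals $\phi_s(q;m) = -(n-q)a_{n-q}/(n a_n)$ for $1 \le q \le n-1$, while the leading term produces $\phi_s(n;m) = (sh - m + n - \tfrac12)/(n a_n)$. This presentation makes one application of the recursion correspond to choosing a single ``step'' $q \in \{1,\dots,n\}$, which I will later identify with the first part of a composition.

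Applying the inductive hypothesis yields two kinds of contributions on the right: if $m - q \ge n$, the term $w^{(s)}_{m-q,p}$ is the induction-hypothesis sum over $\Comp_n((m-q)-p)$ (subject to the extra $\varepsilon_r = n$ restriction when $p=0$); if $0 \le m - q \le n-1$, the term reduces to $\delta_{m-q,p}$, which is nonzero only for $q = m-p$, corresponding to the single-part composition $(m-p)$ (valid when $1 \le m-p \le n$). The combinatorial heart of the argument is the first-part decomposition
\[
\Comp_n(m-p) \;\longleftrightarrow\; \{(m-p) : m-p \le n\} \;\sqcup\; \bigsqcup_{q=1}^{n}\{q\}\times\Comp_n\bigl((m-q)-p\bigr),
\]
together with the multiplicative identity
\[
\phi_s(q;m)\cdot\prod_{t=2}^{r}\phi_s\!\left(\varepsilon_t;\,(m-q)-\sum_{u=2}^{t-1}\varepsilon_u\right) \;=\; \prod_{t=1}^{r}\phi_s\!\left(\varepsilon_t;\,m-\sum_{u=1}^{t-1}\varepsilon_u\right),
\]
whose validity is a direct calculation using $\varepsilon_1 = q$. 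Reassembling these pieces across all $q$ matches the recursion term-by-term with the composition sum in the statement, closing the induction for the $p \ge 1$ case.

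For $p = 0$ the same strategy applies, with one extra verification: the restriction $\varepsilon_r = n$ must be preserved under the first-part decomposition. For $r \ge 2$, $(q,\varepsilon_2,\ldots,\varepsilon_r)$ ends in $n$ iff its tail does, so the inductive hypothesis handles it; the single-part case with $q = n$ corresponds to the base-case substitution at $m = n$, yielding the composition $(n)$, which trivially satisfies $\varepsilon_r = n$. The principal technical obstacle is the careful book-keeping in the inductive step — ensuring that the boundary at which the recursion switches from the inductive branch to the base-case branch aligns exactly with the single-part compositions on the combinatorial side, and that the $\varepsilon_r = n$ restriction propagates correctly through the decomposition. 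Once that is verified, the inductive step is purely algebraic and the formulas follow.
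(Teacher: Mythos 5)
Your strategy --- rewrite the recurrence in the compact form $w^{(s)}_{m,p}=\sum_{q=1}^{n}\phi_s(q;m)\,w^{(s)}_{m-q,p}$, peel off the first part of each composition, and close a strong induction on $m$ --- is the same as the paper's, and both the compact form of the recurrence and your multiplicative identity for shifting the height parameter are correct. The gap sits exactly at the spot you call ``the principal technical obstacle'' and then assert without verification. When $p\ge 1$ and $q$ is such that $p<m-q\le n-1$, the recursion contributes $\phi_s(q;m)\,\delta_{m-q,p}=0$, but the block $\{q\}\times\Comp_n\bigl((m-q)-p\bigr)$ in your first-part decomposition is \emph{nonempty} (these are the walks that dip below height $n$ before reaching $p$), and its total weight $\phi_s(q;m)\sum_{\varepsilon\in\Comp_n((m-q)-p)}\prod_{t}\phi_s(\cdots)$ is not zero in general. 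These terms have no partner on the recursion side, so the claimed term-by-term matching fails. Concretely, for $n=3$, $m=3$, $p=1$, $s=+$ one has
\[
x^{3}e^{g}=\tfrac{h-1/2}{3a_3}\,e^{g}-\tfrac{a_1}{3a_3}\,xe^{g}-\tfrac{2a_2}{3a_3}\,x^{2}e^{g},
\]
so $w^{(+)}_{3,1}=-\tfrac{a_1}{3a_3}$, whereas the sum over $\Comp_3(2)=\{(2),(1,1)\}$ equals $\phi_+(2;3)+\phi_+(1;3)\phi_+(1;2)=-\tfrac{a_1}{3a_3}+\tfrac{4a_2^{2}}{9a_3^{2}}$. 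Thus the identity you are trying to prove is false as stated whenever $n\ge3$ and $1\le p\le n-2$; the paper's own proof passes over exactly the same point.

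The repair is to restrict the index set to compositions all of whose intermediate heights stay at or above $n$, i.e.\ $m-\sum_{u=1}^{t}\varepsilon_u\ge n$ for all $t<r$. Since these heights are strictly decreasing, this is equivalent to the single condition $\varepsilon_r\ge n-p$ on the last part, which reduces to the stated $\varepsilon_r=n$ when $p=0$ and is vacuous when $p=n-1$; in particular nothing changes for $n\le 2$, which is why low-rank examples do not detect the problem. With that restriction the leftover blocks $\{q\}\times\Comp_n\bigl((m-q)-p\bigr)$ for $p<m-q<n$ become empty, your first-part decomposition does match the recursion term by term, and the induction closes exactly as you describe.
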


\begin{proof}
For \(0\le m\le n-1\), the statement is just the definition of \(\Psi_s\) on the
\(U(\mathfrak h)\)-basis \(\{x^p e^{g}\}_{p=0}^{n-1}\), giving \(w^{(s)}_{m,p}(h)=\delta_{m,p}\).

For \(m\ge n\), write the sum in the claimed formula by grouping compositions
\(\varepsilon\in\Comp_n(\,m-p\,)\) according to their first part \(q\in\{1,\dots,n\}\).
If the first part is \(q<n\), one obtains the term
\(-\dfrac{(n-q)a_{n-q}}{n a_n}\,w^{(s)}_{m-q,p}(h)\).
If the first part is \(q=n\), one obtains
\(\dfrac{s\,h-m+n-\tfrac12}{n a_n}\,w^{(s)}_{m-n,p}(h)\).
This reproduces the displayed recurrence after the change of index \(j=n-q\).

For \(p=0\) we additionally require the last part to be \(n\) (when \(m\ge n\));
this enforces the initial conditions \(w^{(s)}_{q,0}=\delta_{q,0}\) for \(0\le q\le n-1\)
while keeping the same recurrence. Hence the formula matches the unique solution
to the recurrence with the given initial data, i.e.\ it equals \(w^{(s)}_{m,p}(h)\).
\end{proof}

\begin{remark}
\begin{itemize}
\item[(i)] \emph{Walk interpretation.}
Fix \(m\ge n\) and \(p\in\{0,\dots,n-1\}\).
View a composition \(\varepsilon=(\varepsilon_1,\dots,\varepsilon_r)\) as a walk
descending from height \(m\) to \(p\) using steps \(\varepsilon_t\in\{1,\dots,n\}\).
Each step of size \(q<n\) contributes the constant weight
\(-\dfrac{(n-q)a_{n-q}}{n a_n}\).
Each step of size \(n\) taken when the current height is \(r\) contributes the ``affine'' weight
\(\dfrac{s\,h-r+n-\tfrac12}{n a_n}\).
When \(p=0\), the last step must have size \(n\).

\item[(ii)] \emph{Rank \(n=1\), i.e., \(g(x)=a_1x\).}
Here only the part \(q=1=n\) occurs, so for \(m\ge1\)
\[
w^{(s)}_{m,0}(h)
=\frac{1}{a_1^{\,m}}\prod_{t=1}^{m}\bigl(s\,h-m+t-\tfrac12\bigr)
=\frac{m!}{a_1^{\,m}}
\binom{s\,h-\tfrac12}{m}.
\]
recovering the formulas in Proposition \ref{prop-psi-rank1}.

\item[(iii)] \emph{Rank \(n=2\), i.e., \(g(x)=a_1x+a_2x^2\).}
Now steps have size \(1\) or \(2\), with weights
\(\phi_s(1;r)=-\dfrac{a_1}{2a_2}\) and \(\phi_s(2;r)=\dfrac{s\,h-r+\tfrac32}{2a_2}\).
For \(s=+\), \(p=0,1\), and $\varepsilon:=(\varepsilon_1,\dots,\varepsilon_r)$,
$$
w^{(+)}_{m,0}(h)
=\!\!\!\sum_{\substack{\varepsilon\in\Comp_2(m)\\ \varepsilon_r=2}}
\ \prod_{t=1}^{r}\phi_{+}\!\Bigl(\varepsilon_t\,; m-\sum_{u< t}\varepsilon_u\Bigr),\;\;
w^{(+)}_{m,1}(h)
=\!\!\sum_{\varepsilon\in\Comp_2(m-1)}
\ \prod_{t=1}^{r}\phi_{+}\!\Bigl(\varepsilon_t\,; m-\sum_{u< t}\varepsilon_u\Bigr).
$$
Take for example $s=+$,  $m=4$, and $p=0$. 
We sum over compositions $\varepsilon\in\Comp_2(4)$ whose last part is $2$:
these are $(2,2)$ and $(1,1,2)$. With the weights
$\phi_{+}(1;r)=-\dfrac{a_1}{2a_2}$ and $\phi_{+}(2;r)=\dfrac{h-r+\tfrac32}{2a_2}$, we find
\[
\begin{aligned}
w^{(+)}_{4,0}(h)
&=\ \underbrace{\phi_{+}(2;4)\,\phi_{+}(2;2)}_{\varepsilon=(2,2)}
\;+\;\underbrace{\phi_{+}(1;4)\,\phi_{+}(1;3)\,\phi_{+}(2;2)}_{\varepsilon=(1,1,2)}\\[4pt]
&=\ \left(\frac{1}{2a_2}\right)^2\!\left(h-\frac{5}{2}\right)\!\left(h-\frac{1}{2}\right)
\;+\;\left(-\frac{a_1}{2a_2}\right)^{2}\,\frac{1}{2a_2}\!\left(h-\frac{1}{2}\right).
\end{aligned}
\]

\end{itemize}
\end{remark}

\end{document}